\documentclass[10pt,a4paper,reqno,twoside]{amsart}

%%%%%%%%%%%%%%%%%%%%%%%%%%%%%%%%%%%%%%%%
%
\setlength{\textwidth}{6.39in}
\setlength{\textheight}{8.8in}
\vfuzz=1.25pt \marginparwidth=20pt \calclayout
%\setlength{\headsep}{8mm}
%
%\numberwithin{equation}{section}

%%%%%%%%%%%%%%%%%%%%%%%%%%%%%%%%%%%%%%%%

\usepackage{verbatim}
\usepackage{amssymb}
\usepackage{upref}
\usepackage{enumerate}
\usepackage{color}
\usepackage{latexsym, amsfonts, amsmath, amscd, esint}% , esint
\usepackage{mathrsfs}

\makeatletter

%%%%%%%%%%%%%%%%%%%%%%%%%%%%%%%%%%%%%%%%

\theoremstyle{plain}
\newtheorem{Thm}{Theorem}[section]

\newtheorem{lemma}{Lemma}[section]
\newtheorem{corollary}{Corollary}[section]
\theoremstyle{definition}

\newtheorem*{Not}{Notation}
\theoremstyle{remark}
\newtheorem{Rem}{Remark}[section]
\newtheorem{Example}[Rem]{Example}

%%%%%%%%%%%%%%%%%%%%%%%%%%%%%%%%%%%%%%%%

%\everymath{\displaystyle}
\allowdisplaybreaks
\newcommand{\N}{\mathbb{N}}

\newcommand{\R}{\mathbb{R}}

\newcommand{\xii}{{\abs{\xi}}}

\DeclareMathOperator{\supp}{supp}

\def\<#1\>{\left\langle#1\right\rangle }

\newcommand{\abs}[1]{\left\vert#1\right\vert}

\renewcommand{\doteq}{{\mathrm{\,:=\,}}}

%%%%%%%%%%%%%%%%%%%%%%%%%%%%%%%%%%%%%%%%%%%%%%%%

%\renewcommand{\theenumi}{\textup{\roman{enumi}}}
%\renewcommand{\labelenumi}{\textup(\theenumi\textup)}

%\title[evolutions  equations with structural damping]{Global in time existence of energy solutions to  semi-linear \\ evolutions  equations with structural damping}
\title[evolutions  equations with structural damping]{The influence of data regularity in the critical exponent \\
for a class of semilinear evolutions  equations }

\author {Marcelo R. Ebert}
%\cortext[cor1]{Corresponding author}
\address{Departamento de Computa\c{c}\~ao e Matem\'atica, Universidade de S\~ao Paulo, Ribeir\~ao Preto, SP, 14040-901, Brasil\\
email: ebert@ffclrp.usp.br}
%\email{ebert@ffclrp.usp.br}

\author {Cleverson R. da Luz}
\address{Department of Mathematics, Federal University of Santa Catarina, Campus Trindade, Florian\'opolis, SC, 88040-900, Brazil\\
	email: cleverson.luz@ufsc.br}

\author {Ma\'ira F. G. Palma}
\address{Department of Mathematics, Federal University of Santa Catarina, Campus Trindade, Florian\'opolis, SC, 88040-900, Brazil\\
	email: mairagauer@gmail.com}

\begin{document}

\begin{abstract}

In this paper we find the critical exponent for the global existence (in time) of small data solutions to the Cauchy problem
for the semilinear dissipative evolution  equations
\[
u_{tt}+(-\Delta)^\delta u_{tt}+(-\Delta)^\alpha
u+(-\Delta)^\theta u_t=|u_t|^p, \quad t\geq 0,\,\, x\in\R^n,\]
with $p>1$,    $2\theta \in [0,  \alpha]$ and $\delta \in (\theta,\alpha]$. We show that, under additional regularity  $\left(H^{\alpha+\delta}(\R^n)\cap L^{m}(\R^n) \right)\times  \left(H^{2\delta}(\R^n)\cap L^{m}(\R^n)\right) $ for initial data, with $m\in (1,2]$, the critical exponent is given by $p_c=1+\frac{2m\theta}{n}$. The nonexistence of global solutions in the  subcritical cases is proved, in the case of integers
parameters $\alpha, \delta, \theta$,
by using the test function method (under suitable sign assumptions on the initial data).

\end{abstract}

\keywords{ semilinear evolution operators, structural dissipation,  global small data solutions, critical exponent, asymptotic behavior of solutions}

\subjclass[2010]{ Primary: 35B33, 35B40; Secondary: 35L71, 35L90}

\maketitle

\section{Introduction}\label{sec:intro}

\baselineskip18pt

Let us consider  the Cauchy problem  for the semilinear dissipative evolution  equations
\begin{equation}\label{eq:CP0}
\begin{cases}
u_{tt}+(-\Delta)^\delta u_{tt}+(-\Delta)^\alpha
u+(-\Delta)^\theta u_t=|u_t|^p, \quad t\geq 0, \,\,x\in\R^n,\\
(u,u_t)(0,x)=(u_0,u_1)(x),
\end{cases}
\end{equation}
with $p>1$, $2\theta\in [0, \alpha]$ and  $\delta \in [0,\alpha]$.
Here we denote by $(-\Delta)^{\frac{b}2}=|D|^b$, with $b \geq 0$, the fractional Laplacian operator defined by its action~$|D|^bf=\mathfrak F^{-1}(\xii^b\hat f)$, where $\mathfrak F$ is the Fourier transform with respect to the space variable, and~$\hat f=\mathfrak F f$.
The case $\alpha = 2$ and $\delta=0$ in \eqref{eq:CP0} is an important model in the literature, it is known as
Germain-Lagrange operator, as well as beam operator and plate operator in the case
of space dimension $n = 1$ and $n = 2$, respectively.

Models to study the
vibrations of thin plates  given by the full von K\'arm\'an system have been studied by several authors,
in particular, see \cite{C}, \cite{MZ}. If $\delta=1$ in \eqref{eq:CP0}, the term $-\Delta u_{tt}$  is to absorb in the system the rotational
inertia effects at the point $x$ of the plate in a positive time $t$. For the plate equation with exterior damping
\begin{equation}\label{plateeq:CPlinear}
\begin{cases}
u_{tt}-\Delta u_{tt}+(-\Delta)^2
u+ u_t=f(u, u_t), \quad t\geq 0, \,\,x\in\R^n,\\
(u,u_t)(0,x)=(u_0,u_1)(x),
\end{cases}
\end{equation}
 where $f(u, u_t)=|\partial_t^j u|^p, j=0,1$, we address the reader to \cite{CL}, \cite{DA17}, \cite{L} and \cite{SK}  for a detailed investigation of properties like existence,  uniqueness,  energy estimates for the solution  and  global existence (in time) of small data solutions.
 The derived estimates in Section \ref{sec:linear} for solutions to the associate linear problem to \eqref{eq:CP0}   could also be applied to generalize  the obtained results in \cite{DA17}, namely,  problem \eqref{plateeq:CPlinear} with power nonlinearity $|u|^p$ and, under additional regularity $L^{m}(\R^n)$, one may expect that the critical exponent  for the global existence (in time) of small data solutions is $\bar p=1+\frac{2m\alpha}{(n-2m\theta)_+}$. But due to fact that only partial results are obtained  in the literature for \eqref{plateeq:CPlinear} with $f(u, u_t)=|u_t|^p$, in this paper we restrict ourselves to the last power nonlinearities.

It is worth to recall some well known result for dissipative  evolutions models without the rotational
inertia term and with power nonlinearities $|u|^p$. For the classical semilinear damped wave equation
\begin{equation}\label{eqwave}
 u_{tt}-\Delta u + u_t = f(u), \qquad u(0,x)=u_0(x), \qquad u_t(0,x)= u_1(x),
 \end{equation}
with $f(u)=|u|^p$, it was proved in~\cite{TY} the global existence of small data energy solutions
in the \textit{supercritical} range~$p>1+\frac{2}{n}$, by assuming  compactly
supported small data from the energy space. Under  additional regularity  the compact support assumption on the data can be removed. By  assuming data in Sobolev spaces with additional regularity $L^1(\R^n)$, a global (in time) existence result was proved in space dimensions~$n=1,2$ in~\cite{IMN04}, by using energy methods, and in space dimension~$n\leq5$ in~\cite{N04}, by using~$L^r-L^q$ estimates, $1\leq r\leq q\leq \infty$.
Nonexistence of weak global (in time) small data solutions is proved in~\cite{TY}  in the \textit{subcritical} case ~$1<p< 1+\frac{2}{n}$ and in \cite{Z} for $p= 1+\frac{2}{n}$. The exponent~$1+\frac{2}{n}$ is well known as Fujita exponent and it is the critical power for the  semilinear parabolic Cauchy problem (see~\cite{F66}). If one removes the assumption that the initial data are in $L^1(\R^n)$ and we only assume that they are in the energy space, then \cite{IO02} the critical exponent is modified to $1+\frac{4}{n}$ or to $1+\frac{2m}{n}$ under  additional regularity  $L^m(\R^n)$, with $m\in (1,2]$.  In \cite{IIOW} the authors proved that,
differently from the case $m=1$,  the critical exponent $p=1+\frac{2m}{n}$, with $m\in [m_0, 2]$, for some $m_0>1$, belongs to the supercritical case.
More recently, under  additional regularity $L^1(\R^n)$ for initial data and with  $f(u)=|u|^{1+ \frac2{n}}\mu(|u|)$,  in \cite{EGR} the authors obtained sharp conditions on the modulus of continuity function $\mu$ in order to determine a threshold between global (in time) existence of small data solutions (stability of the zero solution) and blow-up behavior even of small data solutions to problem \eqref{eqwave}.

Now, let us consider the Cauchy problem for the dissipative evolution equation
\begin{equation}\label{eqev:CP0}
\begin{cases}
u_{tt}+(-\Delta)^\alpha
u+(-\Delta)^\theta u_t=|\partial_t^j u|^{p}, \quad t\geq 0, \,\,x\in\R^n,\\
(u,u_t)(0,x)=(u_0,u_1)(x),
\end{cases}
\end{equation}
with $j=0,1$.  The term $(-\Delta)^\theta u_t$ represents a damping term.
If $\theta>0$ in \eqref{eqev:CP0}, the damping is said to be structural. The assumption $2\theta \leq \alpha$ means that the damping is \emph{effective}, according to the classification introduced in~\cite{DAE15}. The hypothesis of the damping be effective is in order that the multipliers associated to the linearized  problem has no oscillations at low frequencies in the phase space. In this case,  the asymptotic profile of  solutions   may be described by the solution to an anomalous diffusion problem~\cite{K00}.
In \cite{DAE17NA}, under additional regularity $L^1(\R^n)$ for the initial data, the authors proved that    the critical exponent $p_j, j=0,1$, for global small data solutions to \eqref{eqev:CP0} are, respectively, $p_0
     \doteq 1 + \frac{2\alpha}{n-2\theta}, n> 2\theta $ and
$p_1\doteq 1+\frac{2\theta}{n}$.
%By critical exponent we mean that suitable global small data solutions exist in the supercritical case, whereas
%global solutions cannot exist, under suitable sign assumption on the data, in the critical and subcritical cases.

Having in mind that the asymptotic profile of solutions to the linear part of the equation influences the
critical exponent for the problem with power nonlinearity, we consider the linear evolution equation related
to \eqref{eq:CP0}:
\begin{equation}\label{eq:CPlinear}
\begin{cases}
u_{tt}+(-\Delta)^\delta u_{tt}+(-\Delta)^\alpha u+(-\Delta)^\theta u_t=0, \quad t\geq 0, \,\,x\in\R^n,\\
(u,u_t)(0,x)=(u_0,u_1)(x).
\end{cases}
\end{equation}
The total energy for \eqref{eq:CPlinear} is
\[ E(t)= \frac12 \| u_t(t, \cdot)\|_{L^2}^2 + \frac12 \|(-\Delta)^{\frac{\delta}{2}} u_t(t, \cdot)\|_{L^2}^2 +  \frac12 \|(-\Delta)^{\frac{\alpha}{2}} u(t, \cdot)\|_{L^2}^2,\]
 and it dissipates,  i.e.,
 \[E'(t)=-\| (-\Delta)^{\frac{\theta}{2}}u_t(t, \cdot)\|_{L^2}^2,\]
 so that, a natural  space  for solutions is $\mathcal C([0,\infty), H^{\alpha}(\R^n)) \cap C^1([0,\infty), H^{\delta}(\R^n))$, the so-called energy solution space.

If $\delta\leq \theta$, the presence of the structural damping  generates a strong smoothing effect on the solution to~\eqref{eq:CPlinear}, and it guarantees the exponential decay in time of the high-frequencies part of the solution to~\eqref{eq:CPlinear}. Therefore, the decay rate for~\eqref{eq:CPlinear} is only determined by the low-frequencies part of the solution to~\eqref{eq:CPlinear}, which behaves like the solution to the corresponding anomalous diffusion problem~\cite{K00}.
However, if $\delta> \theta$, the rotational inertia term $(-\Delta)^\delta u_{tt}$  creates a structure of regularity-loss type decay in the linear problem
(see Theorem \ref{teo>} in Section \ref{sec:linear}) and it is more difficult to apply these linear estimates to study semilinear problems. This fact can be observed by analysing the structure of the eigenvalues associated with the problem \eqref{eq:CPlinear} in the Fourier space. Due to that special structure, when we get estimates for solutions in the region of high frequencies it is necessary to impose additional regularity on the initial data to obtain the same decay estimates as in the region of low frequencies. Such decay property of the regularity-loss type was also investigated for the dissipative Timoshenko system \cite{KISK}, the plate equation under rotational inertia effects in $\mathbb{R}^n$ \cite{CLI, SK} and a hyperbolic-elliptic system of a radiating gas model \cite{TKSK}.

In this work we are interested in the problem \eqref{eq:CP0} with the property of regularity-loss and effective damping, i.e., $\delta>\theta$ and $\alpha \geq 2 \theta$. Our main goal in this paper is to show that, under additional regularity  $\left(H^{\alpha+\delta}(\R^n)\cap L^{m}(\R^n) \right)\times  \left(H^{2\delta}(\R^n)\cap L^{m}(\R^n)\right) $ for initial data, with $m\in (m_0,2]$ and $m_0\in [1,2)$ given be $\eqref{m0}$, the critical exponent  for the global existence (in time) of small data solutions  to \eqref{eq:CP0} is $p_c=1+\frac{2m\theta}{n}$. Moreover, we show that for $m\in (m_0, 2]$,  $p=p_c$
 belongs to the supercritical case, whereas  for   $m=1$, $p= 1+\frac{2\theta}{n}$ is expected to belong to the subcritical case. 
 
 The critical exponent for \eqref{eq:CP0} in the non-effective case $\alpha<2\theta$
will be discussed in a forthcoming paper.

%Our existence results remains valid in the supercritical case even for non-integer values of $\alpha,  \delta, \theta$. In this case, we denote by $(-\Delta)^{\frac{b}2}=|D|^b$, with $b>0$, the fractional Laplacian operator defined by its action~$|D|^bf=\mathfrak F^{-1}(\xii^b\hat f)$, where $\mathfrak F$ is the Fourier transform with respect to the space variable, and~$\hat f=\mathfrak F f$.

%\textcolor{red}{For the ease of reading, we summarize the various arguments treated in each section:}

%%%%%%%%%%%%%%%%%%%%%%%%%%%%%%%%%%

\section*{Notation}

Through this paper, we use the following.
\begin{Not}
Let $f,g : \Omega \subset \R^n \to\R$ be two  functions. We use the notation $f \sim g$ if there exist two constants $C_1,C_2>0$ such that $C_1 g(y)\leq f(y) \leq C_2 g(y)$ for all $y\in \Omega$. If the inequality is one-sided, namely, if $f(y)\leq Cg(y)$ (resp.~$f(y)\geq Cg(y)$) for all~$y\in \Omega$, then we write~$f\lesssim g$ (resp.~$f\gtrsim g$).
\end{Not}
%
%\begin{Not}
%We denote~$\hat f = \mathfrak F f$, the Fourier transform of a function~$f$ with respect to the $x$ variable.
%\end{Not}
%%
%\begin{Not}
%Let~$\chi_0, \chi_1$ be~$C_c^\infty(\R^{n})$ cut-off nonnegative functions satisfying
%%
%\[ \chi_0+\chi_1=1, \qquad \supp\chi_0\subset \{ \xii\leq 1/2\}, \qquad \text{and} \qquad \supp \chi_1 \subset \{ \xii\geq 1/4 \}. \]
%%
%In particular, it follows that~$\chi_0=1$ in~$\{\xii\leq 1/4\}$ and~$\chi_1=1$ in~$\{\xii\geq 1/2\}$. To localize a distribution~$g$ at low and high frequencies, we denote $g_{\chi_j}=\mathfrak{F}^{-1} (\chi_j\,\widehat{g})$, $j=0,1$.
%\end{Not}
%
\begin{Not}
By~$[\,\cdot\,]: \R\to\N$, we denote the floor function:
\[ [x] \doteq \max \{ n\in\N:\,\, n\leq x\}. \]
By~$(x)_+$ we denote the positive part of~$x\in\R$, i.e. $(x)_+=\max\{x,0\}$.
\end{Not}
\begin{Not}
Let~$\chi_0, \chi_1$ be~$C^\infty(\R^{n})$ cut-off nonnegative functions satisfying
\[ \chi_0+\chi_1=1, \qquad \supp\chi_0\subset \{ \xii\leq 1\}, \qquad \text{and} \qquad \supp \chi_1 \subset \{ \xii\geq 1/2 \}. \]
In particular, it follows that~$\chi_0=1$ in~$\{\xii\leq 1/2\}$ and~$\chi_1=1$ in~$\{\xii\geq 1\}$. To localize a distribution~$g$ at low and high frequencies, we denote $g_{\chi_j}=\mathfrak{F}^{-1} (\chi_j\,\widehat{g})$, $j=0,1$.
\end{Not}
\begin{Not}
For any~$q\in[1,\infty]$, we denote by~$L^q(\R^n)$ the usual Lebesgue space over~$\R^n$. For any~$s\in[0,+\infty)$, we denote by~$H^{s,q}$ the Bessel potential space:
\[ H^{s,q}(\R^n) = \left\{ f\in L^q(\R^n) : \ (1-|D|)^sf\in L^q(\R^n) \right\}. \]
We recall that~$H^{s,q}(\R^n)=W^{s,q}(\R^n)$, the usual Sobolev space, for any~$q\in(1,\infty)$ and~$s\in\N$. As usual, we denote~$H^s(\R^n)\doteq H^{s,2}(\R^n)$ for any~$s\geq0$.
\end{Not}
\begin{Not}
	Let $f,g : \R^+ \times \Omega \to\R$ be two regular functions with  $\Omega \subset \R^n$. We use the notation $f \ast g$ to indicate the convolution with respect to the space variable of the functions $f$ and $g$, i.e.,
	$$ (f \ast g )(t,x) = \int_\Omega f(t,y) \cdot g(t,x-y)\,dy.$$
\end{Not}
%%%%%%%%%%%%%%%%%%%%%%%%%%%%%%%%%%

\section{Main results}\label{sec:main}

Assuming data in a suitable space, one may conclude the local existence of solutions to~\eqref{eq:CP0} for $p>1$ (see \cite{CL}).
The  next result explain  that  for $1<p< p_c$ this solution can not exist globally in time even if the data are supposed to be very small.
\begin{Thm}
\label{thm:test0dec}
Let~$\delta, \theta\in\N$, $\alpha\in\N\setminus\{0\}$, and assume that~$u_0=0$, whereas~$u_1, (-\Delta)^\delta u_1\in L^1_{loc}(\R^n)$ verifies
\begin{equation}\label{eq:datatestdec}
u_1(x)+(-\Delta)^\delta u_1(x) \geq \varepsilon(1+|x|)^{-\frac{n}{m}}(\log \langle x\rangle)^{-1}, \qquad \text{for some~$\varepsilon\in(0,1)$ and $m\in (1, 2]$.}
\end{equation}
Then there exists no global (in time) weak solution to~\eqref{eq:CP0} for any
\[ p\in\left(1, 1+\frac{\min\{2\theta,\alpha\}}{n}m\right).\]
If
\[\int_{\R^n} ( I + (-\Delta)^\delta ) u_1(x)\, dx>0,\]
 then conclusion is still true for $m=1$.
\end{Thm}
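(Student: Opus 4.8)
The plan is to argue by contradiction with the test function method, crucially exploiting that $\alpha,\delta,\theta$ are integers, so that $(-\Delta)^\alpha,(-\Delta)^\delta,(-\Delta)^\theta$ are genuine differential operators and every integration by parts is classical. Suppose a global weak solution $u$ exists. Fix a nonnegative cut-off $\psi$ and test \eqref{eq:CP0} against it. Moving one time derivative off $u_{tt}$ and off $(-\Delta)^\delta u_{tt}$ produces exactly the boundary term $-\int_{\R^n}[u_1+(-\Delta)^\delta u_1]\psi(0,x)\,dx$ (the assumption $u_0=0$ annihilates the remaining boundary contributions) and leaves $u_t$ paired with $-\psi_t-(-\Delta)^\delta\psi_t$; the damping term directly gives $\iint u_t\,(-\Delta)^\theta\psi$. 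The only term still carrying $u$ rather than $u_t$ is the elliptic one, which I convert by introducing the time primitive $\Psi(t,x)\doteq\int_t^\infty\psi(s,x)\,ds$ and integrating by parts once more, using $\partial_t\Psi=-\psi$ and $u(0,\cdot)=0$, so that $\iint (-\Delta)^\alpha u\,\psi=\iint u_t\,(-\Delta)^\alpha\Psi$. This yields the identity
\[ \int_0^\infty\!\!\int_{\R^n}\abs{u_t}^p\psi\,dx\,dt+\int_{\R^n}[u_1+(-\Delta)^\delta u_1]\psi(0,x)\,dx=\int_0^\infty\!\!\int_{\R^n}u_t\,\mathcal R\psi\,dx\,dt, \]
with $\mathcal R\psi\doteq-\psi_t-(-\Delta)^\delta\psi_t+(-\Delta)^\theta\psi+(-\Delta)^\alpha\Psi$.

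Let $I_1$ denote the data integral; by \eqref{eq:datatestdec} and the choice $\psi(0,\cdot)\geq0$ it is nonnegative. Writing $u_t\,\mathcal R\psi=(\abs{u_t}\psi^{1/p})(\mathcal R\psi\,\psi^{-1/p})$ and applying Young's inequality with conjugate exponent $p'=p/(p-1)$, I absorb half of the nonlinear term into the left-hand side and obtain
\[ \tfrac12\int_0^\infty\!\!\int_{\R^n}\abs{u_t}^p\psi+I_1\leq C\,J(R),\qquad J(R)\doteq\int_0^\infty\!\!\int_{\R^n}\abs{\mathcal R\psi}^{p'}\psi^{-p'/p}\,dx\,dt. \]
Since $I_1\geq0$ this gives in particular $I_1\leq C\,J(R)$.

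Now comes the scaling. I take $\psi(t,x)=\eta(t/S)^\ell\,\omega(\abs{x}/R)^\ell$ with standard cut-offs ($\eta(0)=1$, $\omega\equiv1$ on $[0,1]$) and $\ell$ large enough that the weight $\psi^{-p'/p}$ stays bounded where the cut-offs transition. On $\supp\psi$ one has $\psi_t\sim S^{-1}$, $(-\Delta)^\theta\psi\sim R^{-2\theta}$, $(-\Delta)^\alpha\Psi\sim R^{-2\alpha}S$ and $(-\Delta)^\delta\psi_t\sim R^{-2\delta}S^{-1}$. Setting $\mu\doteq\min\{2\theta,\alpha\}$, the balance $S=R^{2\theta}$ (effective case $2\theta\le\alpha$, so $\mu=2\theta$) makes $\psi_t$ and $(-\Delta)^\theta\psi$ the dominant comparable terms and the other two of lower order, whence $\abs{\mathcal R\psi}\lesssim R^{-2\theta}$ on a set of measure $\sim R^nS=R^{n+2\theta}$ and $J(R)\sim R^{\,n-\frac{\mu}{p-1}}$; the case $\mu=\alpha$ arises identically from the balance $S=R^{\alpha}$. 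For the lower bound on $I_1$ only the spatial profile matters: with $\psi(0,x)=\omega(\abs{x}/R)^\ell$ and \eqref{eq:datatestdec},
\[ I_1\gtrsim\varepsilon\int_{1\le\abs{x}\le R}\abs{x}^{-n/m}(\log\langle x\rangle)^{-1}\,dx\sim\frac{R^{\,n(1-1/m)}}{\log R}\qquad(m>1), \]
while for $m=1$ dominated convergence gives $I_1\to\int_{\R^n}(I+(-\Delta)^\delta)u_1\,dx>0$ as $R\to\infty$.

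Finally I combine $I_1\leq C\,J(R)$ with these estimates. For $m>1$ it reads $R^{\,n(1-1/m)}(\log R)^{-1}\lesssim R^{\,n-\mu/(p-1)}$, i.e. $R^{\,\mu/(p-1)-n/m}\lesssim\log R$, which fails as $R\to\infty$ precisely when $\mu/(p-1)>n/m$, that is $p<1+\mu m/n=p_c$; for $m=1$ it reads $\mathrm{const}\lesssim R^{\,n-\mu/(p-1)}$, impossible when $p<1+\mu/n$. In both cases no global solution can exist in the stated subcritical range. I expect the main obstacle to be the rigorous justification of the two integrations by parts for a merely weak solution — in particular controlling the primitive $\Psi$ and verifying that all boundary terms vanish — together with the sharp extraction of the factor $R^{\,n(1-1/m)}/\log R$ from the slowly decaying data, since it is exactly this factor, rather than a constant, that upgrades the exponent from $1+\mu/n$ to the claimed $1+\mu m/n$.
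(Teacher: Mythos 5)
Your proposal is correct and takes essentially the same approach as the paper's proof: the same weak formulation with the time primitive $\Psi$ (the paper's $\Phi_R$), the same Young-inequality absorption, the same parabolic scaling $S=R^{\min\{2\theta,\alpha\}}$ (the paper's $\kappa$), and the same lower bound $c\varepsilon R^{n-\frac{n}{m}}(\log\langle R\rangle)^{-1}$ for the data term, yielding the identical contradiction in the subcritical range. The only cosmetic differences are that the paper postulates the boundedness of the weighted cut-off quantities \eqref{eq:testbnd} (citing \cite{DAL03, MP}) rather than raising the cut-offs to a large power $\ell$, and uses monotonicity of the temporal cut-off to control $\Phi_R^{p'}\varphi_R^{-p'/p}$, while for $m=1$ monotone convergence (or Fatou) is the cleaner tool than dominated convergence since $(I+(-\Delta)^\delta)u_1$ need not be integrable.
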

\begin{Rem}
Hypothesis \eqref{eq:datatestdec} implies that $u_1+(-\Delta)^\delta u_1 \notin L^{m-\epsilon}(\R^n), $
for all $\epsilon>0$.
\end{Rem}
Let $n\geq 1$, $2\theta \in (0,\alpha]$ and let us define %$m_0\in [1,2]$ be the smallest number such that
\begin{equation}\label{m0}
m_0\doteq \min \left\{ m \in [1, 2]; \,\,\, n(2-m) \leq  2m\theta\min\{m, \sqrt{2(2-m)}\}  \right\}.
%n(2-m) \leq  2m\theta\min\left\{m, \sqrt{2(2-m)}  \right\}, \qquad \forall \ m \in [m_0, 2].
 \end{equation}
% \begin{Rem}
%It is clear that  we may put $m_0=2$ in \eqref{m0} and the inequality is satisfied for all $n\geq 1$ and $\theta>0$.
%\end{Rem}
\begin{Rem}
Conditions  \eqref{m0}  implies $\frac2{m}\leq 1+\frac{2m\theta}{n}$ and $1+\frac{2m\theta}{n}\leq \frac{n\,m}{2(n-2m\theta)_+}$ for all $m\in [m_0, 2]$ with $m_0 < 2$. But since  the last inequality should be  strict in  Theorem \ref{thm:L2p} and we are mainly interested in the case $m>1$, from now on we are going to   assume
  $m\in (m_0, 2]$.
\end{Rem}
\begin{Example}
If $n=1, 2$ and $\theta=1$, then $m_0=1$ and the admissible interval for  $m$  is $(1, 2]$.\\
If $n=3$ and  $\theta=\frac12$, then $m_0=\frac32$ and the admissible interval for  $m$  is $(\frac32, 2]$.
\end{Example}
 In the next  result we show that, under addition regularity $L^m (\R^n)$ for initial data,  global small data  solutions exist to \eqref{eq:CP0} for $2\theta\leq \alpha$ and
$  p_c\doteq 1+\frac{2m\theta}{n} < p \leq  \frac{n\,m}{2(n-2m\theta)_+}$.
 In this case, Theorem \ref{thm:test0dec} implies a nonexistence result  for $1<p< p_c$ and  we conclude that $p= p_c$ is the critical exponent for \eqref{eq:CP0}.
%\textcolor{red}{ Why not solutions also in $C^1([0,\infty),H^{\delta})$?}
%
\begin{Thm}\label{thm:L2p}
Let~ $\delta\in(\theta,\alpha]$, $2\theta \in (0,\alpha]$ and   $m \in (m_0,2]$,  with $m_0$ given be $\eqref{m0}$. If
 \[  1+\frac{2m\theta}{n} <  p\leq \frac{q}{2} \leq \frac{n\,m}{2(n-2m\theta)_+},\]
then there exists a sufficiently small~$\varepsilon>0$ such that for any data
\begin{equation}\label{eq:datasp}
\begin{split}
(u_0,u_1) \in \mathcal A  \doteq  \left(H^{\alpha+\delta}(\R^n) \cap L^m (\R^n) \right)\times \left(H^{2\delta}(\R^n) \cap L^m (\R^n) \right),  \qquad \|(u_0, u_1)\|_{\mathcal A}\leq \varepsilon,\nonumber
\end{split}
\end{equation}
%
%with $ s= \max\left\{\alpha+\delta-\theta, \alpha-\delta+ \frac{n\delta}{2\theta} \right\}$ and $ r=\max\left\{2\delta - \theta, \frac{n\delta}{2\theta} \right\}$,
there exists a global (in time) energy solution $u\in\mathcal C([0,\infty),H^{\alpha}(\R^n)\cap L^{q}(\R^n)) \cap C^1([0,\infty), L^2(\R^n)\cap L^{q}(\R^n))$ to~\eqref{eq:CP0}.
Also, the solution to~\eqref{eq:CP0}  satisfies the estimates
\begin{eqnarray}
%\label{eq:decayL2new}
%\|\partial_t^j u(t,\cdot)\|_{L^2} &\lesssim& (1+t)^{1-j-\frac{n}{2\theta}\left(\frac{1}{m}- \frac{1}{2}\right)}\|(u_0, u_1)\|_{\mathcal A}, \qquad  j=0, 1,
%\\
\label{eq:decayL2anew}
\||D|^{\alpha} u(t,\cdot)\|_{L^2} &\lesssim& (1+t)^{-\frac{1}{2(\alpha-\theta)}\left( n \left(\frac{1}{m}- \frac{1}{2}\right) +\alpha-2\theta\right)}
\|(u_0, u_1)\|_{\mathcal A},
\\
\label{eq:decayL2bnew}
\| \partial_t^j u(t,\cdot)\|_{L^{\kappa}} &\lesssim& (1+t)^{1-j-\frac{n}{2 \theta}\left(\frac{1}{m}- \frac{1}{\kappa}\right)}\|(u_0, u_1)\|_{\mathcal A},  \qquad  j=0,1,
\end{eqnarray}
for $ 2 \leq \kappa \leq q $.
\end{Thm}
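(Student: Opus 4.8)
The plan is to realize the global solution as the unique fixed point of the operator
\[N[u](t)\doteq K_0(t)u_0+K_1(t)u_1+\int_0^t K_1(t-s)\,(I+(-\Delta)^\delta)^{-1}\,|u_t(s,\cdot)|^p\,ds,\]
where $K_0(t),K_1(t)$ are the propagators of the linear problem \eqref{eq:CPlinear}, so that $v=K_0(t)v_0+K_1(t)v_1$ solves \eqref{eq:CPlinear} with data $(v_0,v_1)$; the factor $(I+(-\Delta)^\delta)^{-1}$ arises from dividing by the leading operator $(I+(-\Delta)^\delta)\partial_t^2$ before applying Duhamel's principle to \eqref{eq:CP0}. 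A function is an energy solution of \eqref{eq:CP0} exactly when it is a fixed point of $N$.

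First I would fix the functional setting. Guided by \eqref{eq:decayL2anew}--\eqref{eq:decayL2bnew}, I would work in the Banach space $X(T)$ consisting of $u\in\mathcal C([0,T],H^\alpha\cap L^q)\cap\mathcal C^1([0,T],L^2\cap L^q)$ with norm obtained by dividing each of $\||D|^\alpha u(t)\|_{L^2}$ and $\|\partial_t^j u(t)\|_{L^\kappa}$ (for $j=0,1$ and $\kappa\in\{2,q\}$) by its expected time weight and taking the supremum over $t\in[0,T]$; the intermediate cases $2<\kappa<q$ follow by interpolation, the weight in \eqref{eq:decayL2bnew} being affine in $1/\kappa$. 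The linear part $K_0(t)u_0+K_1(t)u_1$ is controlled in $X(T)$ by $\|(u_0,u_1)\|_{\mathcal A}$ directly from the linear estimates of Section~\ref{sec:linear}, in particular Theorem~\ref{teo>}, which already encode the regularity-loss structure through the data space $\mathcal A$.

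Next I would estimate the Duhamel term. Writing $g(s)=|u_t(s,\cdot)|^p$, the crucial identities are $\|g(s)\|_{L^m}=\|u_t(s)\|_{L^{mp}}^p$ and $\|g(s)\|_{L^2}=\|u_t(s)\|_{L^{2p}}^p$. The hypotheses $p\le q/2$ and $m\le2$ give $2\le mp\le 2p\le q$, so both exponents are admissible in \eqref{eq:decayL2bnew}, while the lower bound $mp\ge2$ holds since $2/m\le p_c<p$, the first inequality being one of the conditions encoded in \eqref{m0}. Inserting the $X(T)$-norm of $u$ then yields $\|g(s)\|_{L^m}\lesssim(1+s)^{-\frac{n(p-1)}{2m\theta}}\|u\|_{X(T)}^p$, and similarly for $\|g(s)\|_{L^2}$. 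Applying the linear estimates of Theorem~\ref{teo>} to $K_1(t-s)(I+(-\Delta)^\delta)^{-1}g(s)$---where the smoothing of order $2\delta$ furnished by $(I+(-\Delta)^\delta)^{-1}$ exactly matches the $H^{2\delta}$ regularity budget carried by the $u_1$-slot of $\mathcal A$ and thereby feeds the derivative loss of the high-frequency part---reduces each component of $\|N[u](t)\|_{X(T)}$ to a time integral $\int_0^t(1+t-s)^{-a}(1+s)^{-b}\,ds$ with $b=\frac{n(p-1)}{2m\theta}$. The decisive point is that
\[b=\frac{n(p-1)}{2m\theta}>1\iff p>1+\frac{2m\theta}{n}=p_c,\]
so the supercriticality assumption $p>p_c$ is precisely what makes this integral decay at the linear rate $(1+t)^{-a}$, reproducing the weights of \eqref{eq:decayL2anew}--\eqref{eq:decayL2bnew} and giving $\|N[u]\|_{X(T)}\lesssim\|(u_0,u_1)\|_{\mathcal A}+\|u\|_{X(T)}^p$. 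Hence a small ball of $X(T)$ is invariant under $N$ for $\varepsilon$ small.

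Finally, to promote invariance to a contraction (and so obtain existence and uniqueness) I would use $\big||a|^p-|b|^p\big|\lesssim(|a|^{p-1}+|b|^{p-1})|a-b|$, valid for $p>1$, together with H\"older's inequality, to bound $\|N[u]-N[\tilde u]\|_{X(T)}\lesssim(\|u\|_{X(T)}^{p-1}+\|\tilde u\|_{X(T)}^{p-1})\|u-\tilde u\|_{X(T)}$ by the same time-integral analysis. I expect the \emph{main obstacle} to be the high-frequency, regularity-loss part of the nonlinear estimate: in contrast to the exponentially damped regime $\delta\le\theta$, here the high frequencies decay only polynomially and at the expense of extra derivatives, so one must check that the $2\delta$-smoothing of $(I+(-\Delta)^\delta)^{-1}$ applied to $|u_t|^p$---which is controlled only in $L^2$ and, for the energy component, through $\||D|^\alpha u\|_{L^2}$---supplies enough regularity to absorb the loss quantified in Theorem~\ref{teo>} while still leaving an integrable time weight. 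Reconciling this derivative loss with the limited smoothness of the power nonlinearity, uniformly in $m\in(m_0,2]$, is the delicate step; the low-frequency estimates are the standard effective-damping ones and close exactly at $p=p_c$.
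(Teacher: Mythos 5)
Your proposal is correct and follows essentially the same route as the paper's proof: the identical Duhamel fixed-point formulation with $E_1=(I+(-\Delta)^\delta)^{-1}K_1$, the same weighted solution norm modeled on \eqref{eq:decayL2anew}--\eqref{eq:decayL2bnew}, the same interpolation bound $\|\,|u_t|^p\|_{L^\kappa}=\|u_t(s,\cdot)\|_{L^{\kappa p}}^p\lesssim(1+s)^{-\frac{n(p-1)}{2m\theta}}\|u\|^p$ for $\kappa\in\{m,2\}$ (using $2/m\le p_c$ from \eqref{m0}), and the same use of $p>1+\frac{2m\theta}{n}$ to make the Duhamel integral close at the linear rate via Lemma \ref{lem:integral}. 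The ``delicate step'' you flag at high frequencies is exactly what the paper settles in Remark \ref{usefulrem0}: the conditions $q\le\frac{nm}{(n-2m\theta)_+}$ force $r_0,r_1\le 2\delta$, so the $2\delta$-smoothing in $E_1$ reduces the required Sobolev regularity of $|u_t|^p$ to $(r_j-2\delta)_+=0$, i.e.\ plain $L^2$.
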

In particular, if $n \leq 2m\theta$ and $q=+\infty$ we have the following result:
\begin{corollary}\label{thm:L2}
	Let~ $\delta\in(\theta,\alpha]$, $2\theta \in (0,\alpha]$, $m \in (1,2]$ and assume that $1\leq n \leq 2 m\theta$.
	Let~$p> 1+\frac{2 m\theta}{n}$, then there exists a sufficiently small~$\varepsilon>0$ such that for any data
	\begin{equation}\label{eq:dataspcor}
	\begin{split}
	(u_0,u_1) \in \mathcal A  \doteq  \left(H^{\alpha+\delta}(\R^n)\cap L^{m}(\R^n) \right)\times \left(H^{2\delta}(\R^n)\cap L^{m}(\R^n) \right),  \qquad \|(u_0, u_1)\|_{\mathcal A}\leq \varepsilon,\nonumber
	\end{split}
	\end{equation}
	%
	%with $ s= \max\left\{\alpha+\delta-\theta, \alpha-\delta+ \frac{n\delta}{2\theta} \right\}$ and $ r=\max\left\{2\delta - \theta, \frac{n\delta}{2\theta} \right\}$,
	there exits a global (in time) energy solution $u\in\mathcal C([0,\infty),H^{\alpha}(\R^n)\cap L^{\infty}(\R^n)) \cap C^1([0,\infty), L^2(\R^n)\cap L^{\infty}(\R^n))$ to~\eqref{eq:CP0}.
	Also,  the solution to~\eqref{eq:CP0}  satisfies the estimates \eqref{eq:decayL2anew} and \eqref{eq:decayL2bnew}
	%
	%\begin{eqnarray}\label{eq:decayL2a}
%	\||D|^{\alpha} u(t,\cdot)\|_{L^2} &\lesssim& (1+t)^{-\frac{1}{2(\alpha-\theta)}\left( n \left(\frac{1}{m}- \frac{1}{2}\right) +\alpha-2\theta\right)} \|(u_0, u_1)\|_{\mathcal A}, \\ 	\label{eq:decayL2b} 	\| \partial_t^j u(t,\cdot)\|_{L^k} &\lesssim& (1+t)^{1-j-\frac{n}{2\theta}\left(\frac{1}{m}- \frac{1}{\kappa}\right)}\|(u_0, u_1)\|_{\mathcal A},  \qquad  j=0,1,
%	\end{eqnarray}
	for $\kappa\geq 2$.
\end{corollary}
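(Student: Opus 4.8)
The plan is to obtain Corollary~\ref{thm:L2} directly from Theorem~\ref{thm:L2p} by specializing to the regime in which the upper endpoint for the nonlinearity degenerates. First I would note that the hypothesis $1\le n\le 2m\theta$ is precisely the statement $(n-2m\theta)_+=0$, so that the right endpoint appearing in Theorem~\ref{thm:L2p} becomes
\[ \frac{nm}{2(n-2m\theta)_+}=+\infty. \]
Hence the admissible range $1+\frac{2m\theta}{n}<p\le \frac q2\le \frac{nm}{2(n-2m\theta)_+}$ collapses to the single lower constraint $p>1+\frac{2m\theta}{n}$, the intermediate exponent $q$ may be chosen arbitrarily large, and in particular we may take $q=+\infty$, for which $p\le q/2$ is vacuous. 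This matches exactly the hypotheses of the corollary.

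Second, I would verify that the relaxation from $m\in(m_0,2]$ to $m\in(1,2]$ is legitimate in this regime. From $n\le 2m\theta$ one gets $n(2-m)\le 2m\theta(2-m)$, and since $2-m\le m$ and $2-m\le\sqrt{2(2-m)}$ hold for every $m\in[1,2]$, it follows that $n(2-m)\le 2m\theta\min\{m,\sqrt{2(2-m)}\}$; thus $m$ satisfies the defining inequality in~\eqref{m0} and $m\ge m_0$. As observed in the Remark following~\eqref{m0}, the only reason Theorem~\ref{thm:L2p} demands the \emph{strict} inequality $m>m_0$ is to force $1+\frac{2m\theta}{n}<\frac{nm}{2(n-2m\theta)_+}$; but here the right-hand side is $+\infty$, so this strictness is automatic and $m\ge m_0$ already suffices. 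Therefore the full strength of Theorem~\ref{thm:L2p} is available for every $m\in(1,2]$, and the $L^2$--based estimate~\eqref{eq:decayL2anew} together with~\eqref{eq:decayL2bnew} for every finite $\kappa\ge 2$ is inherited at once.

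The genuinely delicate point is reaching the endpoint $q=\kappa=+\infty$, since Theorem~\ref{thm:L2p} is stated only for finite $q$. I would argue by a limiting procedure: for each finite $q$ the contraction solution coincides, by uniqueness in the energy class $\mathcal C([0,\infty),H^{\alpha})\cap C^1([0,\infty),L^2)$, with one and the same energy solution $u$; consequently~\eqref{eq:decayL2bnew} holds for every finite $\kappa\ge2$, and since $n\le 2m\theta$ keeps the limiting exponent $1-j-\frac{n}{2\theta m}$ finite, I would pass to the limit $\kappa\to\infty$ to obtain $u\in\mathcal C([0,\infty),L^\infty)$ and $\partial_t u\in\mathcal C([0,\infty),L^\infty)$ together with the endpoint case of~\eqref{eq:decayL2bnew}. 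The main obstacle I anticipate is exactly this endpoint bookkeeping: one must ensure that the linear $L^m$--$L^\infty$ decay estimates underlying Section~\ref{sec:linear} remain valid with the stated rate in this regularity-loss setting, and that the implied constants stay controlled as $\kappa\to\infty$, so that the nonlinear Duhamel term still closes at $\kappa=+\infty$. Both should go through because $p>1+\frac{2m\theta}{n}$ renders the relevant time integral convergent and because the additional regularity $H^{\alpha+\delta}\times H^{2\delta}$ controls the high frequencies, but tracking the constants uniformly up to the endpoint is where the actual effort lies.
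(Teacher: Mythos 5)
Your reduction to Theorem \ref{thm:L2p} and the arithmetic that goes with it are correct and match the paper's intent: $n\le 2m\theta$ gives $(n-2m\theta)_+=0$, and since $2-m\le\min\{m,\sqrt{2(2-m)}\}$ for every $m\in[1,2]$, each $m\in(1,2]$ satisfies the inequality defining \eqref{m0}, so $m\ge m_0$; the strictness required in Theorem \ref{thm:L2p} is automatic because the right endpoint $\frac{nm}{2(n-2m\theta)_+}$ is $+\infty$. Up to this point you are doing exactly what the paper does (the corollary is introduced with the words ``if $n\le 2m\theta$ and $q=+\infty$'').

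The gap is in your handling of the endpoint $q=\kappa=+\infty$. Your premise that the machinery behind Theorem \ref{thm:L2p} is confined to finite $q$ is a misreading: Theorem \ref{teo>} is stated and proved for $q\in[2,+\infty]$, including $q=+\infty$ (the case $q=\infty$, $\eta>1$ is handled there via Lemma 3.1 of \cite{DAE14JDE}), so the paper simply runs the contraction argument of Theorem \ref{thm:L2p} verbatim in the space $Z(T)$ built on $L^2\cap L^\infty$ norms; no limiting procedure occurs, and the corollary is an immediate specialization. Your substitute argument, passing to the limit $\kappa\to\infty$ in \eqref{eq:decayL2bnew}, leaves unproved precisely the content that matters. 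First, for each finite $q$ the contraction produces its own implied constant $C_q$ \emph{and} its own smallness threshold $\varepsilon_q$; without uniformity in $q$ there need not exist a single $\varepsilon>0$ for which the solution exists and satisfies \eqref{eq:decayL2bnew} for all finite $\kappa$ simultaneously, so ``tracking constants'' is not endpoint bookkeeping but the entire proof, and you defer it. Second, your identification of the solutions obtained for different $q$ invokes uniqueness in the energy class $\mathcal C([0,\infty),H^\alpha)\cap C^1([0,\infty),L^2)$, which the paper does not establish (the fixed-point argument gives uniqueness only within balls of $Z(T)$), so this step also needs an argument you do not supply. Third, even granting uniform bounds, controlling $\sup_{\kappa\ge 2}\|\partial_t^j u(t,\cdot)\|_{L^\kappa}$ yields an $L^\infty$ bound pointwise in $t$, but it does not yield $u\in\mathcal C([0,\infty),L^\infty(\R^n))$ and $u_t\in\mathcal C([0,\infty),L^\infty(\R^n))$, which is part of the statement of Corollary \ref{thm:L2}; the paper gets this continuity for free by solving directly in the $L^\infty$-based space. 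The clean repair is to discard the limit and observe that every linear estimate used in the proof of Theorem \ref{thm:L2p} is already valid at $q=+\infty$.
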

\begin{Rem} Let $u_1, (-\Delta)^\delta u_1\in L^1_{loc}(\R^n)$  and  $2\theta\leq \alpha$. If
	\[\int_{\R^n} ( I + (-\Delta)^\delta ) u_1(x)\, dx>0,\] then Theorem \ref{thm:test0dec} implies that
	one can not have the existence of global  solutions to \eqref{eq:CP0} with $u_0=0$ and $p< 1+\frac{2\theta}{n}$.
If $u_1+(-\Delta)^\delta u_1 \notin L^{m-\epsilon}(\R^n), $ for all $\epsilon>0$ such that $m-\epsilon>1$,
then  Theorem \ref{thm:test0dec} implies that
	one can not have the existence of global solutions to \eqref{eq:CP0} with $u_0=0$ for $p < 1+\frac{2\theta(m-\epsilon)}{n}$ for all $\epsilon>0$.\\
This shows that, in general, the assumption $p>1+\frac{2m\theta}{n}$
	can not be removed in  Theorem \ref{thm:L2p}.
\end{Rem}
\begin{Rem}
	In Theorem \ref{thm:L2p} it appears a loss of regularity with respect to the initial data. This loss of regularity is related to the obtained estimates for solutions to the linear problem at high frequencies. In general one can not avoid this effect, for instance, if the initial data
	$u_1 \notin H^{2\delta}(\R^n)$, i.e., $u_1+(-\Delta)^\delta u_1 \notin L^{2}(\R^n), $ then, for $2\theta \leq \alpha$, the conclusion of Theorem \ref{thm:test0dec} is true for all $1<p < 1 +\frac{4\theta}{n}$ even if $u_1 \in L^{1}(\R^n)$.
	\end{Rem}
\begin{Rem}
The condition  $q \leq \frac{n\,m}{(n-2m\theta )_+}$ in Theorem \ref{thm:L2p} implies $n\left(\frac1m- \frac{1}{q}\right)\leq 2\theta$. Hence, with the assumed regularity for initial data in Theorem \ref{thm:L2p},  Theorem \ref{teo>} $(iii)$ implies that   solutions to the linear \eqref{eq:CPlinear} satisfies \eqref{eq:decayL2bnew} (see Remark \ref{usefulrem0}).
 %If $p> \frac{n\,m}{2(n-2m\theta)_+}$ one still may derive  global  existence results, but the long time behaviour of  solutions change.
Similarly, the condition ~$n \leq 2 m\theta$ in Corollary \ref{thm:L2} is  in order that the $L^\infty$ norm of the
	partial derivative in time of solutions to the linear problem \eqref{eq:CPlinear}   has the decay given by $(1+t)^{-\frac{n}{2 m \theta}}$
	(see Theorem \ref{teo>}).
%Decay estimates~\eqref{eq:decayL2anew} and \eqref{eq:decayL2bnew}  coincide with the estimates obtained for solutions to the corresponding linear problem~\eqref{eq:CPlinear}.\\
\end{Rem}

%\begin{Rem}
%	Decay estimates~\eqref{eq:decayL2anew} and \eqref{eq:decayL2bnew}  coincide with the estimates obtained for solutions to the corresponding linear problem~\eqref{eq:CPlinear}.
%\end{Rem}
%
\begin{Example}  Let us assume that initial data has additional regularity  $L^{m}(\R^n)$. Then:\\
	The critical exponent for the beam equation with strong damping and rotational inertia effects, i.e., $n=1$, $\delta=1$, $\alpha=2$ and $\theta=1$
is $p_c=1+2m$.\\
	The critical exponent for plate equation with strong damping and rotational inertia effects, i.e., $n=2$, $\delta=1$, $\alpha=2$ and $\theta=1$ is $p_c=1+m$.\\
In both cases $(1, 2]$ is the admissible interval for  $m$.
\end{Example}
In the previous results,  one may feel the influence of additional regularity $L^m(\R^n)$, with $m\in (1,2]$, in the critical exponent.
In the case  $\theta =0$ we no longer have this effect, so in the next result we assume only data in the $L^2(\R^n)$ basis:
 \begin{Thm}\label{thm:theta0}
Let~ $\theta =0$, ~$0<\delta \leq\alpha$ and $n< 4\delta$.
Then, for all $p>1$ there exists a sufficiently small~$\varepsilon>0$ such that for any data
\begin{equation}\label{eq:dataspnew}
\begin{split}
(u_0,u_1) \in \mathcal A  \doteq  H^{\alpha+\delta}(\R^n)\times H^{2\delta}(\R^n),  \qquad \|(u_0, u_1)\|_{\mathcal A}\leq \varepsilon,\nonumber
\end{split}
\end{equation}
%
%with $ s= \max\left\{\alpha+\delta-\theta, \alpha-\delta+ \frac{n\delta}{2\theta} \right\}$ and $ r=\max\left\{2\delta - \theta, \frac{n\delta}{2\theta} \right\}$,
there exists a global (in time) energy solution $u\in\mathcal C([0,\infty),H^{\alpha}(\R^n)\cap L^{q}(\R^n)) \cap C^1([0,\infty), L^2(\R^n)\cap L^{q}(\R^n))$ to~\eqref{eq:CP0}, with $q\geq 2p$.
Also,  the solution to~\eqref{eq:CP0}  satisfies the estimates
\begin{eqnarray}
\label{eq:decayL2theta0}
\|\partial_t^j u(t,\cdot)\|_{L^2} &\lesssim& (1+t)^{-j}\|(u_0, u_1)\|_{\mathcal A}, \qquad  j=0, 1,
\\
\label{eq:decayL2theta01}
\||D|^{\alpha} u(t,\cdot)\|_{L^2} &\lesssim& (1+t)^{-\frac{1}{2}}
\|(u_0, u_1)\|_{\mathcal A},
\\
\label{eq:decayL2theta02}
\| u(t,\cdot)\|_{L^{q}} &\lesssim& (1+t)^{- \min\left\{\frac{n}{2\alpha}\left(\frac12- \frac{1}{q}\right), \frac{\alpha+\delta}{2\delta}- \frac{n}{2\delta}\left(\frac12- \frac{1}{q}\right)\right\}} \|(u_0, u_1)\|_{\mathcal A},
\\
\label{eq:decayL2theta03}
\| \partial_t u(t,\cdot)\|_{L^{q}} &\lesssim& (1+t)^{\frac{n}{2\delta}\left(\frac12- \frac{1}{q}\right)-1}\|(u_0, u_1)\|_{\mathcal A},
\end{eqnarray}
for all $q \geq 2p$.
\end{Thm}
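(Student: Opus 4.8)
The proof follows the contraction scheme underlying Theorem~\ref{thm:L2p}, now carried out in the pure $L^2$ framework, since for $\theta=0$ no additional $L^m$ regularity is used. Writing \eqref{eq:CP0} as $(I+(-\Delta)^\delta)u_{tt}+(-\Delta)^\alpha u+u_t=|u_t|^p$ and denoting by $u^{\mathrm{lin}}(t)=E_0(t)u_0+E_1(t)u_1$ the solution of the linear problem \eqref{eq:CPlinear}, any solution of \eqref{eq:CP0} satisfies
\begin{equation*}
u(t)=u^{\mathrm{lin}}(t)+\int_0^t E_1(t-s)\,(I+(-\Delta)^\delta)^{-1}\,|u_t(s)|^p\,ds,
\end{equation*}
where the factor $(I+(-\Delta)^\delta)^{-1}$ accounts for the mass operator in front of $u_{tt}$. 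First I would fix $T>0$ and introduce the space $X(T)$ of functions $u\in\mathcal C([0,T],H^\alpha\cap L^q)\cap C^1([0,T],L^2\cap L^q)$ equipped with the norm obtained by dividing each quantity in \eqref{eq:decayL2theta0}--\eqref{eq:decayL2theta03} by its stated power of $(1+t)$ and taking the supremum over $t\in[0,T]$. By Theorem~\ref{teo>} with $\theta=0$, the linear part obeys $\|u^{\mathrm{lin}}\|_{X(T)}\lesssim\|(u_0,u_1)\|_{\mathcal A}$ uniformly in $T$; the data regularity $H^{\alpha+\delta}\times H^{2\delta}$ is precisely what the regularity-loss structure at high frequencies demands.

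The heart of the argument is the control of the Duhamel term $N[u](t)\doteq\int_0^t E_1(t-s)(I+(-\Delta)^\delta)^{-1}|u_t(s)|^p\,ds$. I would estimate the source in $L^2$ by
\begin{equation*}
\big\||u_t(s)|^p\big\|_{L^2}=\|u_t(s)\|_{L^{2p}}^p\lesssim(1+s)^{p\left(\frac{n}{2\delta}\left(\frac12-\frac1{2p}\right)-1\right)}\|u\|_{X(T)}^p,
\end{equation*}
using \eqref{eq:decayL2theta03} with $q=2p$, which explains the hypothesis $q\geq 2p$. The exponent equals $\frac{n(p-1)}{4\delta}-p$; since for the $L^2$ component we only need the energy bound $\|E_1(t-s)(I+(-\Delta)^\delta)^{-1}g\|_{L^2}\lesssim\|g\|_{L^2}$, the $L^2$ norm of $N[u](t)$ is dominated by $\|u\|_{X(T)}^p\int_0^t(1+s)^{\frac{n(p-1)}{4\delta}-p}\,ds$. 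This integral is bounded uniformly in $t$ exactly when $\frac{n(p-1)}{4\delta}-p<-1$, i.e. when $n<4\delta$; this is the sole role of the hypothesis $n<4\delta$, and it is what makes every $p>1$ admissible without any lower threshold. For $\partial_t N[u]$, for the $H^\alpha$ component and for the $L^q$ components I would insert the corresponding decay factors from Theorem~\ref{teo>} and split the integral into $[0,t/2]$ and $[t/2,t]$ so as to match the two regimes, the low-frequency diffusion rate with exponent $\alpha$ and the high-frequency regularity-loss rate; this reproduces the minimum appearing in \eqref{eq:decayL2theta02}.

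Collecting these bounds gives $\|u^{\mathrm{lin}}+N[u]\|_{X(T)}\lesssim\|(u_0,u_1)\|_{\mathcal A}+\|u\|_{X(T)}^p$, and the elementary inequality $\big||u_t|^p-|v_t|^p\big|\lesssim(|u_t|^{p-1}+|v_t|^{p-1})|u_t-v_t|$ together with H\"older's inequality yields the Lipschitz estimate $\|N[u]-N[v]\|_{X(T)}\lesssim(\|u\|_{X(T)}^{p-1}+\|v\|_{X(T)}^{p-1})\|u-v\|_{X(T)}$, with all constants independent of $T$. Taking $\varepsilon$ small makes $u\mapsto u^{\mathrm{lin}}+N[u]$ a contraction on a small ball of $X(T)$, so Banach's fixed point theorem produces for every $T$ a unique solution, hence a global one satisfying \eqref{eq:decayL2theta0}--\eqref{eq:decayL2theta03}. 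The step I expect to be most delicate is the faithful bookkeeping of the high-frequency, regularity-loss part: one must check that the smoothing $(I+(-\Delta)^\delta)^{-1}$, the data regularity $H^{\alpha+\delta}\times H^{2\delta}$, and the Gagliardo--Nirenberg inequalities needed for the $L^{2p}$ bound are mutually compatible, so that no loss beyond the one already built into the norm of $X(T)$ is incurred.
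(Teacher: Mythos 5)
Your proposal is correct and follows essentially the same route as the paper's proof: the same weighted norm on the solution space, the same linear bounds from Theorem \ref{teo>} (with $\eta=2$, so only $\|\,|u_t|^p\|_{L^2}$ is needed for the Duhamel term), the same interpolation estimate $\|u_t(s)\|_{L^{2p}}^{p}\lesssim(1+s)^{\frac{n}{4\delta}(p-1)-p}\|u\|_{Z(T)}^{p}$, and the same use of $n<4\delta$ to make this exponent less than $-1$ so the Duhamel integral converges for every $p>1$. The only cosmetic difference is that you split the time integral into $[0,t/2]$ and $[t/2,t]$ by hand where the paper cites Lemma \ref{lem:integral}, and your statement that integrability is the ``sole role'' of $n<4\delta$ is a slight overstatement (it also guarantees, via the constraint on $\beta$ in Theorem \ref{teo>}, that the high-frequency linear estimates hold for all admissible $q$), but this does not affect the validity of the argument.
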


\begin{Rem}
	As in previous theorems, estimates \eqref{eq:decayL2theta0} and \eqref{eq:decayL2theta01} coincide with  the  obtained estimates for solutions to the corresponding linear problem~\eqref{eq:CPlinear} at low frequencies. However, with the required regularity, estimate \eqref{eq:decayL2theta02} and \eqref{eq:decayL2theta03}  may coincide with the  obtained estimate for solutions to ~\eqref{eq:CPlinear}  at high frequencies.
\end{Rem}
In the next result we show that,  for initial data with additional regularity $L^{m}(\R^n)$,  $p=1+\frac{2 m\theta}{n}$ belongs to the supercritical case. For the classical damped wave equation, this phenomenon has been investigated in \cite{IIOW}.
\begin{Thm}\label{thm:sp}
Let~ $\delta\in(\theta,\alpha]$, $2\theta \in (0,\alpha]$ and   $m \in (m_0,2]$,  with $m_0$ given be $\eqref{m0}$.
		 Let~$p= 1+\frac{2 m\theta}{n} \leq \frac{q}{2}< \frac{n\,m}{2(n-2m\theta )_+}$.
		Then there exists a sufficiently small~$\varepsilon>0$ such that for any data
		\begin{equation}\label{eq:dataspcritical}
		\begin{split}
		(u_0,u_1) \in \mathcal A  \doteq  \left(H^{\alpha+\delta}(\R^n)\cap L^{m}(\R^n)\right) \times \left(H^{2\delta}(\R^n)\cap L^{m}(\R^n)\right),  \qquad \|(u_0, u_1)\|_{\mathcal A}\leq \varepsilon,
		\end{split}\nonumber
		\end{equation}
		%
		%with $ s= \max\left\{\alpha+\delta-\theta, \alpha-\delta+ \frac{n\delta}{2\theta} \right\}$ and $ r=\max\left\{2\delta - \theta, \frac{n\delta}{2\theta} \right\}$,
		there exists a global (in time) energy solution $u\in\mathcal C([0,\infty),H^{\alpha}(\R^n)\cap L^{q}(\R^n)) \cap C^1([0,\infty), L^2(\R^n)\cap L^{q}(\R^n))$ to~\eqref{eq:CP0}.
		Also,  the solution to~\eqref{eq:CP0}  satisfies the estimates
		\begin{eqnarray}
		%\label{eq:decayL2}
%		\|\partial_t^j u(t,\cdot)\|_{L^2} &\lesssim& (1+t)^{1-j-\frac{n}{2\theta}\left(\frac{1}{m}- \frac{1}{2}\right)}\|(u_0, u_1)\|_{\mathcal A}, \qquad  j=0, 1,
%		\\
		\label{eq:decayL2a}
		\||D|^{\alpha} u(t,\cdot)\|_{L^2} &\lesssim& \left\{ \begin{array}{ll} (1+t)^{-\frac{1}{2(\alpha-\theta)}\left( n \left(\frac{1}{m}- \frac{1}{2}\right) +\alpha-2\theta\right)}
		\|(u_0, u_1)\|_{\mathcal A}, \hspace{1,87cm} \text{if } \alpha = 2 \theta  \nonumber\\ (1+t)^{-\frac{1}{2(\alpha-\theta)}\left( n \left(\frac{1}{m}- \frac{1}{2}\right) +\alpha-2\theta\right)}
		\log(e+t)\|(u_0, u_1)\|_{\mathcal A}, \quad \text{if } \alpha > 2 \theta, \end{array} \right.
		\\
		\label{eq:decayL2b}
		\| \partial_t^j u(t,\cdot)\|_{L^{\kappa}} &\lesssim& (1+t)^{1-j-\frac{n}{2 \theta}\left(\frac{1}{m}- \frac{1}{\kappa}\right)}\|(u_0, u_1)\|_{\mathcal A},  \qquad  j=0,1,\nonumber
		\end{eqnarray}
		for all $2\leq \kappa \leq q$.
\end{Thm}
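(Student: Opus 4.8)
The plan is to obtain the global solution as the unique fixed point of the operator $u\mapsto u^{\mathrm{lin}}+\mathcal N u$ in a complete metric space built on the sharp time-weighted norm
\[
\|u\|_X \doteq \sup_{t\geq 0}(1+t)^{r}\,\omega(t)^{-1}\,\||D|^{\alpha} u(t,\cdot)\|_{L^2} + \sum_{j=0}^{1}\sup_{t\geq0}\sup_{2\leq \kappa\leq q}(1+t)^{-1+j+\frac{n}{2\theta}\left(\frac1m-\frac1\kappa\right)}\|\partial_t^j u(t,\cdot)\|_{L^\kappa},
\]
where $r\doteq\frac{1}{2(\alpha-\theta)}\left(n\left(\frac1m-\frac12\right)+\alpha-2\theta\right)$, and $\omega(t)\doteq\log(e+t)$ when $\alpha>2\theta$ while $\omega(t)\doteq 1$ when $\alpha=2\theta$. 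Here $u^{\mathrm{lin}}$ is the solution of the linear problem \eqref{eq:CPlinear} with data $(u_0,u_1)$, and the nonlinear part is given by Duhamel's principle as
\[
\mathcal N u(t)\doteq\int_0^t K(t-s)\,(I+(-\Delta)^\delta)^{-1}\,|u_t(s)|^p\,ds,
\]
where $K(\tau)g$ denotes the solution at time $\tau$ of \eqref{eq:CPlinear} with data $(0,g)$. The theorem then reduces to the two estimates $\|u^{\mathrm{lin}}+\mathcal N u\|_X\lesssim \varepsilon+\|u\|_X^p$ and $\|\mathcal N u-\mathcal N v\|_X\lesssim(\|u\|_X^{p-1}+\|v\|_X^{p-1})\|u-v\|_X$, from which a contraction on a ball of radius $\sim\varepsilon$ follows for $\varepsilon$ small.

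First I would dispatch the linear part: Theorem \ref{teo>}, together with the regularity $(u_0,u_1)\in\mathcal A$, reproduces the weights in $X$ exactly, the low-frequency parts matching the polynomial rates and the high-frequency regularity-loss parts being absorbed by the $H^{\alpha+\delta}\times H^{2\delta}$ norms; hence $\|u^{\mathrm{lin}}\|_X\lesssim\|(u_0,u_1)\|_{\mathcal A}\leq\varepsilon$. For the nonlinear operator I would split the Duhamel integral at $s=t/2$. On $[0,t/2]$, where $1+t-s\sim 1+t$, I apply the low-frequency $L^m\to L^2$ and $L^m\to L^\kappa$ estimates of Theorem \ref{teo>} to $K(t-s)$, and estimate the source through $\||u_t(s)|^p\|_{L^m}=\|u_t(s)\|_{L^{mp}}^p$. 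On $[t/2,t]$ I instead use the $L^2$-based estimates, exploiting that the resolvent $(I+(-\Delta)^\delta)^{-1}$ contributes a gain of $2\delta$ derivatives at high frequencies, which is precisely what compensates the regularity-loss structure and lets a source that is merely in $L^2$ produce an output in $H^\alpha$; here the source is estimated by $\||u_t(s)|^p\|_{L^2}=\|u_t(s)\|_{L^{2p}}^p$. The hypothesis $p\leq q/2$ guarantees $2p\leq q$, and the choice $m>m_0$ forces, through the first inequality implied by \eqref{m0}, that $\tfrac2m\leq p_c=p$, i.e. $mp\geq 2$; thus both exponents $mp$ and $2p$ lie in $[2,q]$ and the corresponding norms of $u_t$ are controlled by $\|u\|_X$.

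The decisive computation is that at the critical exponent the source is exactly logarithmically borderline. Using the definition of $\|u\|_X$ with $j=1$ one gets, since $p-1=\tfrac{2m\theta}{n}$,
\[
\||u_t(s)|^p\|_{L^m}\lesssim(1+s)^{-\frac{n}{2\theta m}(p-1)}\|u\|_X^p=(1+s)^{-1}\|u\|_X^p,
\]
while $\||u_t(s)|^p\|_{L^2}\lesssim(1+s)^{-\frac{n}{2\theta}\left(\frac1m-\frac12\right)-1}\|u\|_X^p$, which is faster by the nonnegative amount $\frac{n}{2\theta}\left(\frac1m-\frac12\right)$. Feeding $(1+s)^{-1}$ into the $[0,t/2]$ part yields a contribution of size $(1+t)^{-\rho}\log(1+t)$, where $\rho$ is the decay rate of the relevant propagator; this logarithm is genuinely harmful (and must be compensated by $\omega(t)^{-1}$ in the norm) only in the \emph{resonant} case, where $\rho$ equals the target exponent in $X$. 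I expect this resonance to occur exactly for the energy estimate when $\alpha>2\theta$, whereas for the $L^\kappa$-norms and for the energy when $\alpha=2\theta$ the relevant propagator decays strictly faster than the target, so that $(1+t)^{-\rho}\log(1+t)\lesssim(1+t)^{-(\text{target})}$ and no logarithm survives; this is what keeps those weights log-free. The difference estimate follows the same splitting after applying the elementary inequality $\abs{|a|^p-|b|^p}\lesssim\left(|a|^{p-1}+|b|^{p-1}\right)|a-b|$ with $a=u_t$, $b=v_t$.

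The step I expect to be the main obstacle is precisely this logarithmic bookkeeping: one must verify, separately for $\alpha=2\theta$ and $\alpha>2\theta$ and in each frequency regime and on each of the two time-intervals, that the borderline $(1+s)^{-1}$ source produces at most the single power of $\log$ allotted by $\omega$, and in particular that the $[t/2,t]$ contribution to the energy norm (whose own weight already carries a log when $\alpha>2\theta$) does not generate a second logarithmic factor. It is this sharp resonance analysis, rather than any new linear ingredient, that separates the critical case treated here from the supercritical Theorem \ref{thm:L2p}, where the source decays strictly faster than $(1+s)^{-1}$ and the iteration closes with no logarithmic correction at all.
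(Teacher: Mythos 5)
Your framework coincides with the paper's: Duhamel representation, contraction in a time‑weighted norm whose only logarithmic allowance $\omega(t)$ sits on the $\||D|^\alpha u\|_{L^2}$ component for $\alpha>2\theta$, the linear part handled by Theorem \ref{teo>}, and a splitting of the Duhamel integral at $s=t/2$. The gap is exactly at the step you single out as decisive, and your resonance count is inverted. Keeping $\eta=m$, the $L^2\cap L^m\to L^\kappa$ rate of Theorem \ref{teo>} (iii) for $\partial_t^j E_1(t-s)\ast\psi$ is $(1+t-s)^{1-j-\frac{n}{2\theta}\left(\frac1m-\frac1\kappa\right)}$, which is \emph{exactly} the target exponent of the corresponding component of your norm; the same coincidence holds for the energy component when $\alpha=2\theta$, since then $\frac{1}{2(\alpha-\theta)}\left(n\left(\frac1m-\frac12\right)+\alpha-2\theta\right)=\frac{n}{2\theta}\left(\frac1m-\frac12\right)$. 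Hence feeding the borderline source $\||u_t(s)|^p\|_{L^m}\lesssim(1+s)^{-1}\|u\|_X^p$ into the $[0,t/2]$ part produces, by Lemma \ref{lem:integral} with $\mu=1$, a factor $\log(e+t)$ on top of the target decay in \emph{every} component, not only in the energy component for $\alpha>2\theta$. This contradicts the log‑free estimate for $\partial_t^j u$ in $L^\kappa$ asserted by the theorem, and it also destroys closure of the iteration: if you try to repair it by allowing a logarithm in the $u_t$‑weights of $X$, the source becomes $(1+s)^{-1}(\log(e+s))^p$, the integral over $[0,t/2]$ then gives $(\log(e+t))^{p+1}$, and the map no longer takes a ball of $X$ into itself.

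The missing idea, which is the paper's key device, is to estimate the $\partial_t^j u$ components (and the energy component when $\alpha=2\theta$) by applying Theorem \ref{teo>} with an auxiliary exponent $\eta\in(1,m)$, chosen so that $\eta p_c>2$ (possible since $m>m_0$) and $n\left(\frac1\eta-\frac1q\right)\leq2\theta$ (possible because the hypothesis $\frac q2<\frac{nm}{2(n-2m\theta)_+}$ is strict here, unlike in Theorem \ref{thm:L2p}). Then the propagator over‑decays relative to the target by $\frac{n}{2\theta}\left(\frac1\eta-\frac1m\right)$, while the source $\||u_t(s)|^{p_c}\|_{L^\eta}\lesssim(1+s)^{-1+\frac{n}{2\theta}\left(\frac1\eta-\frac1m\right)}\|u\|_X^{p_c}$ is strictly sub‑borderline, so the integral over $[0,t/2]$ grows by exactly that amount; the two contributions cancel and no logarithm appears. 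A short computation shows this cancellation works for the $|D|^\alpha$‑component only if $\frac{1}{2\theta}\leq\frac{1}{2(\alpha-\theta)}$, i.e.\ $\alpha\leq 2\theta$: precisely when $\alpha>2\theta$ it is unavailable, and only there does the paper revert to $\eta=m$ and Lemma \ref{lem:integral} with $\mu=1$, absorbing the resulting single logarithm into $h(t)$. Since the source only ever involves the log‑free $u_t$ components, no powers of logarithms are generated and the contraction closes. Without this $\eta<m$ device (or an equivalent replacement), your scheme cannot yield the stated estimates.
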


\section{ Non-existence via test function method}
For the  proof  of the next result one may follow as in \cite{DAE17NA} (see also \cite{DAL03}), but in order to explain the influence of the rotational inertia term we sketch the proof.
\begin{proof}(Theorem~\ref{thm:test0dec})
We fix a nonnegative, non-increasing, test function~$\varphi\in\mathcal{C}_c^\infty([0,\infty))$ with~$\varphi=1$ in~$[0,1/2]$ and~$\supp\varphi\subset[0,1]$, and a  nonnegative, radial, test function~$\psi\in\mathcal{C}_c^\infty(\R^n)$, such that~$\psi=1$ in the ball~$B_{1/2}$, and~$\supp\psi\subset B_1$. We also assume~$\psi(x)\leq\psi(y)$ when~$|x|\geq|y|$. Here~$B_r$ denotes the ball of radius~$r$, centered at the origin. We may assume (see, for instance, \cite{DAL03, MP}) that
\begin{equation}\label{eq:testbnd}
\varphi^{-\frac{p'}{p}}\,|\varphi'|^{p'}, \qquad\psi^{-\frac{p'}{p}} \bigl(|\Delta^\delta \psi|^{p'}+|\Delta^\theta \psi|^{p'}
+|\Delta^\alpha \psi|^{p'}\bigr), \qquad \text{are bounded,}
\end{equation}
where~$p'=p/(p-1)$. We remark that the assumption that~$\delta, \theta$ and~$\alpha$ are integers plays a fundamental role here. Then, for~$R\geq1$, we define:
\begin{equation}\label{eq:testR}
\varphi_R(t) = \varphi(R^{-\kappa}t), \quad \psi_R(x)=\psi(R^{-1}x),\nonumber
\end{equation}
for some~$\kappa>0$ which we will fix later.

Let~$\Phi_R\in\mathcal{C}_c^\infty([0,\infty))$ be the test function defined by
\[ \Phi_R(t)=\int_t^\infty\varphi_R(s)\,ds. \]
(Indeed, we notice that~$\supp\Phi_R\subset[0,R^\kappa]$, since~$\supp\varphi_R\subset[0,R^\kappa]$). In particular, $\Phi_R'=-\varphi_R$.

Let us assume that~$u\in L^1_{loc}([0, T]\times \R^n)$, with $u_t\in L^p_{loc}([0, T]\times \R^n)$ is a (local or global) weak solution to~\eqref{eq:CP0}. Let~$R>0$, and also assume that~$R\leq T^\kappa$, if~$u$ is a local solution in~$[0,T]\times\R^n$. Integrating by parts, and recalling that~$u_0=0$ and~$\varphi_R(0)=1$, we obtain
\begin{eqnarray}\label{eq:test0}
I_R&=&\int_0^\infty \int_{\R^n} u_t \bigl(-\varphi_R'\psi_R+\varphi_R(-\Delta)^\theta \psi_R+\Phi_R(-\Delta)^\alpha\psi_R-
\varphi_R'(-\Delta)^\delta \psi_R\bigr)\,dxdt \nonumber\\
 &&- \int_{\R^n} \psi_R(x)(I+ (-\Delta)^\delta) u_1(x)\,\,dx ,\nonumber
\end{eqnarray}
where:
\[ I_R=\int_0^\infty \int_{\R^n} |u_t|^p\varphi_R\psi_R\,dxdt. \]
We may now apply Young inequality to estimate:
\begin{multline*}
\int_0^\infty \int_{\R^n} |u_t| \bigl(|\varphi_R'|\,\psi_R+|\varphi_R|\,|(-\Delta)^\theta \psi_R|+\Phi_R\,|(-\Delta)^\alpha\psi_R|+|\varphi_R'(-\Delta)^\delta \psi_R|\bigr)\,dxdt \\
   \leq \frac1p\,I_R+\frac1{p'}\,\int_0^\infty \int_{\R^n} (\varphi_R\psi_R)^{-\frac{p'}{p}} \bigl(|\varphi_R'\psi_R|+|\varphi_R(-\Delta)^\theta \psi_R|+|\Phi_R(-\Delta)^\alpha \psi_R|+ |\varphi_R'(-\Delta)^\delta \psi_R|\bigr)^{p'}dxdt.
\end{multline*}
 Due to
\begin{gather*}
\varphi_R'(t) = R^{-\kappa}\varphi'(R^{-\kappa}t), \qquad
(-\Delta)^k\psi_R(x)=R^{-2k}\bigl((-\Delta)^k\psi\bigr)(R^{-1}x),
\end{gather*}
recalling~\eqref{eq:testbnd}, we may estimate
\begin{eqnarray*}
\int_0^\infty \int_{\R^n} (\varphi_R\psi_R)^{-\frac{p'}{p}} |\varphi_R'\psi_R|^{p'}\,dxdt
    & \leq& C\,R^{-\kappa p'+n+\kappa}\,,\\
    \int_0^\infty \int_{\R^n} (\varphi_R\psi_R)^{-\frac{p'}{p}} |\varphi_R(-\Delta)^\theta \psi_R|^{p'}\,dxdt
    & \leq& C\,R^{-2\theta p'+n+\kappa}\,,\\
\int_0^\infty \int_{\R^n} (\varphi_R\psi_R)^{-\frac{p'}{p}} |\varphi_R'(-\Delta)^\delta \psi_R|^{p'}\,dxdt
    & \leq& C\,R^{-(\kappa+2\delta) p'+n+\kappa}\,.
\end{eqnarray*}
Due to~$\Phi_R(t) \leq \Phi_R(0)\leq R^\kappa$, and being $\Phi_R^{p'}\,\varphi_R^{-\frac{p'}{p}}$ bounded  one gets:
\[ \int_0^\infty \int_{\R^n} (\varphi_R\psi_R)^{-\frac{p'}{p}} |\Phi_R(-\Delta)^\alpha\psi_R|^{p'}\,dxdt \leq C\,R^{-2\alpha p'+\kappa p'+ n+\kappa}\,.\]
We may now fix~$\kappa=\min\{2\theta,\alpha\}$, so that, summarizing, we proved that
\[ \frac1{p'}\,I_R\leq C\,R^{-\kappa p' +n+\kappa} -\int_{\R^n}\psi_R(x)(I+ (-\Delta)^\delta) u_1(x)\,\,dx.\]
 Recalling assumption~\eqref{eq:datatestdec}, there exists $c>0$ such that
 \[ \int_{\R^n}\psi_R(x)(I+ (-\Delta)^\delta) u_1(x)\,\,dx \geq \varepsilon \int_{\R^n} (1+|x|)^{-\frac{n}{m}}(\log \langle x\rangle)^{-1}\,\psi_R(x)\,dx \geq c\varepsilon\,R^{n-\frac{n}{m}}(\log \langle R\rangle)^{-1}. \]
As a consequence:
\[ \frac1{p'}\,I_R \leq C\,R^{-\kappa p' +n+\kappa} - c\varepsilon\,R^{n-\frac{n}{m}}(\log \langle R\rangle)^{-1} = R^n\, \bigl(C\,R^{-(p'-1)\kappa}-c\varepsilon\,R^{-\frac{n}{m}}(\log \langle R\rangle)^{-1}\bigr).\]
Assume, by contradiction, that the solution~$u$ is global. In the subcritical case~$p< 1+\frac{\min\{2\theta,\alpha\}}{n}m$, it follows that~$(p'-1)\kappa>\frac{n}{m}$ and $I_R<0$, for any sufficiently large~$R$, and this contradicts the fact that~$I_R\geq0$.
%The critical case~$p=1+\frac{\min\{2\theta,\alpha\}}{n}\eta$ is treated in standard way, but we omit the details for the sake of brevity.
 Therefore, $u$ cannot be a global (in time) solution  and this concludes the proof.
\end{proof}

\section{The linear estimates}\label{sec:linear}

We consider the inhomogeneous linear problem
\begin{equation}\label{eq:CPlinearinho}
\begin{cases}
u_{tt}+(-\Delta)^\delta u_{tt}+(-\Delta)^\alpha u+(-\Delta)^\theta u_t=f(t,x), \quad t\geq 0,\,\, x\in\R^n,\\
(u,u_t)(0,x)=(u_0,u_1)(x).
\end{cases}
\end{equation}
We introduce the Fourier multipliers
\begin{equation}\label{K0K1}
\hat{K}_0(t,\xi)=\dfrac{\lambda_{+}e^{t\lambda_{-}}
-\lambda_{-}e^{t\lambda_{+}}}{\lambda_{+}-\lambda_{-}}
\hspace{0,8cm} \text{and } \hspace{0,8cm}
\hat{K}_1(t,\xi)=\dfrac{e^{t\lambda_{+}}-e^{t\lambda_{-}}}{\lambda_{+}-\lambda_{-}},
\end{equation}
with
\[\lambda_{\pm}=\frac{|\xi|^{2\theta}}{2(1+|\xi|^{2\delta})}\left(-1
\pm \sqrt{1-4|\xi|^{2(\alpha-2\theta)}(1+|\xi|^{2\delta})}\,\right).\]
The solution to \eqref{eq:CPlinearinho} may be written as
\[
u(t, x)=K_0(t)\ast u_0+K_1(t)\ast u_1+ \int_0^t E_1(t-s, x) \ast \, f(s,x)\, ds,
\]
where $K_0(t,x)=\mathcal{F}^{-1}[\hat{K}_0(t,\cdot)](x)$, $K_1(t,x)=\mathcal{F}^{-1}[\hat{K}_1(t,\cdot)](x)$ and
\[E_1(t, x)=\left(I+(-\Delta)^\delta\right)^{-1}K_1(t,x).\]
Some  estimates in the following result was  already discussed in   \cite{CP} for $\eta=1$, but in order to deal with the semilinear problem  we had to derive
estimates for a large range of parameters:
%As a consequence, we have the following long time behavior:
\begin{Thm}\label{teo>} Let $\alpha>0$, $2\theta\leq \alpha$, $\delta \in [0,\alpha]$, $\eta\in [1,2]$,  $q \in [2,+\infty]$
	and $j=0,1$. Then the kernels defined by \eqref{K0K1} satisfy: \\
	\noindent (i)
	\[
	||\partial_x^{\gamma_2}\partial_t^j K_0(t, \cdot)\ast \psi||_{L^q}\lesssim
	(1+t)^{-\frac{1}{2(\alpha-\theta)}\left(n\left(\frac1{\eta}-\frac1q\right)+|\gamma_2|\right)-j}||\psi||_{L^{\eta}}+
	g(t)||\psi||_{H^{s_j}}.
	\]
	\vspace{0,3cm}\noindent (ii) If  $n\left(\frac1{\eta}-\frac1{q}\right)+|\gamma_2|-2\theta\geq0$ then
	\[
	||\partial_x^{\gamma_2}\partial_t^j K_1(t, \cdot)\ast \psi||_{L^q} \lesssim
	(1+t)^{-\frac{1}{2(\alpha-\theta)}\left(n\left(\frac1{\eta}-\frac1q\right)
		+|\gamma_2|-2\theta\right)-j}||\psi||_{L^{\eta}}
	+	g(t)||\psi||_{H^{r_j}}\]
	and
	\[
	||\partial_x^{\gamma_2}\partial_t^j E_1(t, \cdot)\ast \psi||_{L^q}\lesssim
	(1+t)^{-\frac{1}{2(\alpha-\theta)}\left(n\left(\frac1{\eta}-\frac1q\right)
		+|\gamma_2|-2\theta\right)-j}||\psi||_{L^{\eta}}
	+	g(t)||\psi||_{H^{(r_j-2\delta)_+}}.
	\]
A special exception is given in the case   $j=0$, $\eta=1$, $q\geq 2$ and  $n\left(1-\frac1{q}\right)+|\gamma_2|-2\theta = 0$,
 namely,
\[
	||\partial_x^{\gamma_2} K_1(t, \cdot)\ast \psi||_{L^q}\lesssim
	\ln(e+t)||\psi||_{L^{1}}
	+	g(t)||\psi||_{H^{r_0}},
\]
and
\[
	||\partial_x^{\gamma_2} E_1(t, \cdot)\ast \psi||_{L^q}\lesssim
	\ln(e+t)||\psi||_{L^{1}}
	+g(t)||\psi||_{H^{(r_0-2\delta)_+}}.
	\]\\
	\vspace{0,3cm}\noindent (iii) If  $n\left(\frac1{\eta}-\frac1{q}\right)+|\gamma_2|-2\theta < 0$,
 	then
	\[
	||\partial_x^{\gamma_2}\partial_t^j K_1(t, \cdot)\ast \psi||_{L^q}\lesssim
	(1+t)^{1-j-\frac{1}{2\theta}\left(n\left(\frac1{\eta}-\frac1q\right)+|\gamma_2|\right)}||\psi||_{L^{\eta}}
	+	g(t)||\psi||_{H^{r_j}}
	\]
	and
	\[
	||\partial_x^{\gamma_2}\partial_t^j E_1(t, \cdot)\ast \psi||_{L^q}\lesssim
	(1+t)^{1-j-\frac{1}{2\theta}\left(n\left(\frac1{\eta}-\frac1q\right)+|\gamma_2|\right)}||\psi||_{L^{\eta}}
	+g(t)||\psi||_{H^{(r_j-2\delta)_+}}.
	\] \vspace{-0,4cm}\\
	Here $g(t) = \left\{
	\begin{array}{ll} e^{-ct}\,\,(c \in \R^+), \hspace{2,78cm}
	\text{if} \,\,\,\delta \leq \theta  \\
	(1+t)^{\frac{n}{2(\delta-\theta)}\left( \frac{1}{2} - \frac{1}{q} \right)}(1+t)^{-\frac{1}{2\beta}}, \hspace{0,5cm} \text{if} \,\,\,\theta <
	\delta \hspace{0,25cm} \text{with}  \hspace{0,25cm} 0 < \beta < \frac{\delta-\theta}{n} \frac{2q}{(q-2)_+},\end{array} \right.$
	\vspace{0,2cm}\\
	\noindent$s_0 = \left\{
	\begin{array}{ll} |\gamma_2 |\hspace{1,52cm}
	\text{if} \,\,\,\delta \leq \theta \\
	|\gamma_2| + \frac{\delta-\theta}{\beta} \hspace{0,5cm} \text{if}
	\,\,\,\theta < \delta \end{array} \right.$, $\,\,\,r_0=s_0+\delta-\alpha$ \hspace{0,25cm}and\hspace{0,25cm}
	$r_1 = \left\{
	\begin{array}{ll} |\gamma_2 |\hspace{1,52cm}
	\text{if} \,\,\,\delta \leq \theta \\
	|\gamma_2| + \frac{\delta-\theta}{\beta} \hspace{0,5cm} \text{if}
	\,\,\,\theta < \delta \end{array} \right.$,   $\,\,\,s_1=r_1 +\alpha
	-\delta$.
\end{Thm}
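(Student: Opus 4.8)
The plan is to pass to the Fourier side and analyse the multipliers $\hat K_0,\hat K_1$ through the asymptotics of the characteristic roots $\lambda_\pm$ of the symbol $(1+|\xi|^{2\delta})\lambda^2+|\xi|^{2\theta}\lambda+|\xi|^{2\alpha}$, decomposing the phase space into the low- and high-frequency zones carried by $\chi_0$ and $\chi_1$. Writing $D(\xi)\doteq 1-4|\xi|^{2(\alpha-2\theta)}(1+|\xi|^{2\delta})$ and $\omega(\xi)\doteq \frac{|\xi|^{2\theta}}{2(1+|\xi|^{2\delta})}$, I would first record that for $|\xi|\le 1$ (with $\alpha>2\theta$) the roots are real and negative, with $\lambda_+\sim-|\xi|^{2(\alpha-\theta)}$, $\lambda_-\sim-|\xi|^{2\theta}$ and $\lambda_+-\lambda_-\sim|\xi|^{2\theta}$, whereas for $|\xi|\ge 1/2$ they are complex conjugate, with $\operatorname{Re}\lambda_\pm=-\omega(\xi)$ and $|\operatorname{Im}\lambda_\pm|\sim|\xi|^{\alpha-\delta}\sim|\lambda_+-\lambda_-|$. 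Since $2\theta\le\alpha$ forces $2(\alpha-\theta)\ge2\theta$, the slowly decaying branch $e^{t\lambda_+}$ governs the low frequencies. The intermediate bounded annulus where $D$ changes sign (and a double root may occur) is harmless and is controlled by a compactness and continuity argument.

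For the low-frequency parts I would reduce every quantity to an anomalous-diffusion multiplier: $\chi_0\hat K_0\sim\chi_0\,e^{-ct|\xi|^{2(\alpha-\theta)}}$, while $\chi_0\hat K_1\sim\chi_0\,|\xi|^{-2\theta}e^{-ct|\xi|^{2(\alpha-\theta)}}$, the singular prefactor being compensated by the cancellation $e^{t\lambda_+}-e^{t\lambda_-}$ that keeps $\hat K_1$ bounded of size $\sim t$ near the origin. The $L^\eta$–$L^q$ bounds then follow from the standard kernel estimates for $|D|^a e^{-t|D|^b}$ (with $b=2(\alpha-\theta)$ and $a=|\gamma_2|$ for $K_0$, $a=|\gamma_2|-2\theta$ for $K_1$) together with Young's convolution inequality for $\frac1r=1-\frac1\eta+\frac1q$. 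The cases $(ii)$ and $(iii)$ correspond exactly to whether the governing low-frequency integral is dominated by the decaying tail $|\xi|^{-2\theta}e^{-ct|\xi|^{2(\alpha-\theta)}}$ (sign $\ge 0$, rate $\frac1{2(\alpha-\theta)}$) or by the plateau $\hat K_1\sim t$ near $\xi=0$ (sign $<0$, rate $\frac1{2\theta}$ and the gain $(1+t)^{1-j}$); the borderline $n(\frac1\eta-\frac1q)+|\gamma_2|-2\theta=0$ with $\eta=1$ produces the logarithmic loss and must be treated by hand. When $\alpha=2\theta$ the low-frequency roots become complex and $\hat K_1$ carries an oscillation $e^{\pm ict|\xi|^{2\theta}}$, so the same bounds persist but only under the stated sign restriction.

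The heart of the proof, and its main obstacle, is the high-frequency zone, where the \emph{regularity-loss} structure emerges. From $|\operatorname{Im}\lambda_\pm|\sim|\lambda_+-\lambda_-|\sim|\xi|^{\alpha-\delta}$ and $\lambda_+\lambda_-=|\xi|^{2\alpha}/(1+|\xi|^{2\delta})$ I would bound $|\partial_t^j\hat K_0|\lesssim|\xi|^{j(\alpha-\delta)}e^{-\omega(\xi)t}$ and $|\partial_t^j\hat K_1|\lesssim|\xi|^{(j-1)(\alpha-\delta)}e^{-\omega(\xi)t}$, the decisive point being that for $\delta>\theta$ one has $\omega(\xi)\sim|\xi|^{-2(\delta-\theta)}\to 0$, so the exponential does not damp the genuinely high modes. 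To convert this into time decay, for $q=2$ I would use Plancherel together with $e^{-z}\le C_\beta z^{-1/(2\beta)}$ applied to $z\sim\omega(\xi)t$, which trades a factor $(1+t)^{-1/(2\beta)}$ against a loss of $\frac{\delta-\theta}{\beta}$ derivatives on $\psi$; bookkeeping the derivative powers above then reproduces exactly the indices $s_j,r_j$ (the shifts $r_0=s_0+\delta-\alpha$ and $s_1=r_1+\alpha-\delta$ coming from the factors $|\xi|^{\pm(\alpha-\delta)}$). For $2\le q<\infty$ I would factor through the Sobolev embedding $\|\,\cdot\,\|_{L^q}\lesssim\||D|^{n(1/2-1/q)}\,\cdot\,\|_{L^2}$ (and through $\|\hat{\,\cdot\,}\|_{L^1}$ when $q=\infty$), splitting the exponential and absorbing the extra power via $\sup_\xi|\xi|^{n(1/2-1/q)}e^{-\omega(\xi)t/2}\lesssim(1+t)^{\frac{n}{2(\delta-\theta)}(1/2-1/q)}$, which is precisely the growth factor in $g(t)$; the constraint $0<\beta<\frac{\delta-\theta}{n}\frac{2q}{(q-2)_+}$ is exactly what makes the product $g(t)$ decay. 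When $\delta\le\theta$ one instead has $\omega(\xi)\gtrsim1$ bounded below, hence honest exponential decay $g(t)=e^{-ct}$ and no loss beyond $|\gamma_2|$.

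Finally, the $E_1$-estimates follow from those for $K_1$, since $\hat E_1=(1+|\xi|^{2\delta})^{-1}\hat K_1$ is unchanged up to constants at low frequency and gains a factor $|\xi|^{-2\delta}$ at high frequency, lowering the required regularity from $H^{r_j}$ to $H^{(r_j-2\delta)_+}$. Summing the low- and high-frequency contributions, the high-frequency term always appearing multiplied by $g(t)\|\psi\|_{H^{s_j}}$ or $g(t)\|\psi\|_{H^{r_j}}$, yields the three cases $(i)$–$(iii)$ and the logarithmic exception. Beyond the high-frequency optimization in $\beta$, I expect the genuinely delicate points to be the uniform control of the symbols across the transition frequency $D(\xi)=0$ and the separate handling of the oscillatory factor $e^{it\operatorname{Im}\lambda_\pm}$ in $L^q$ for $q\ne2$, which is what dictates the sign threshold in $(ii)$.
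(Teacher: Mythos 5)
Your overall architecture --- passing to the Fourier side, splitting via $\chi_0,\chi_1$, the root asymptotics $\lambda_+\approx-|\xi|^{2(\alpha-\theta)}$, $\lambda_-\approx-|\xi|^{2\theta}$ at low frequencies and $\mathrm{Re}\,\lambda_\pm\approx-|\xi|^{2(\theta-\delta)}$, $|\lambda_+-\lambda_-|\approx|\xi|^{\alpha-\delta}$ at high frequencies, and the trade $e^{-z}\le C_\beta z^{-1/(2\beta)}$ producing the loss of $\frac{\delta-\theta}{\beta}$ derivatives --- coincides with the paper's, and your low-frequency treatment away from the borderline (Young's convolution inequality in place of the paper's Hausdorff--Young plus H\"older with $\frac1r=\frac1\eta-\frac1q$) gives the stated rates. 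But there are two genuine gaps. First, you have the borderline case backwards. When $n\left(\frac1\eta-\frac1q\right)+|\gamma_2|-2\theta=0$, \emph{any} Young/H\"older-type bound of the region $t^{-1/(2\theta)}\le|\xi|\le1$ produces an unbounded logarithmic factor for \emph{every} $\eta\in[1,2]$, whereas statement (ii) asserts the log-free estimate whenever $\eta>1$ and carves out the exception only at $\eta=1$, $j=0$. So the hard step is not "treating the $\eta=1$ borderline by hand" (the log there comes for free); it is \emph{removing} the log when $\eta>1$. The paper does this with tools absent from your proposal: for $2\le q<\infty$ it factors the operator as the Riesz potential $I_{2\theta-|\gamma_2|}$, bounded $L^\eta\to L^q$ precisely because $\eta>1$ and $\frac1\eta-\frac1q=\frac{2\theta-|\gamma_2|}{n}$ \cite{Stein}, composed with the multiplier $\chi_0|\xi|^{2\theta}\hat K_1(t,\cdot)$, which satisfies $|\partial_\xi^\beta(\chi_0|\xi|^{2\theta}\hat K_1)|\lesssim|\xi|^{-|\beta|}$ uniformly in $t$, so Mikhlin--H\"ormander gives a $t$-uniform bound; the case $q=\infty$, $\eta>1$ uses Lemma 3.1 of \cite{DAE14JDE}. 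This endpoint is not decorative: Theorem \ref{thm:L2p} is applied with $q$ up to $\frac{nm}{(n-2m\theta)_+}$, i.e.\ exactly at $n\left(\frac1m-\frac1q\right)=2\theta$, where your version of the estimate would be strictly weaker than claimed.

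Second, your high-frequency argument for $q>2$ contains a false step. When $\delta>\theta$ one has $\omega(\xi)\approx|\xi|^{-2(\delta-\theta)}\to0$ as $|\xi|\to\infty$, hence
\[
\sup_{|\xi|\ge1}|\xi|^{n\left(\frac12-\frac1q\right)}e^{-\omega(\xi)t/2}=+\infty,
\]
so the "absorption via a sup" you propose is impossible; this is the very essence of the regularity-loss structure: genuinely high modes are undamped, and frequency powers cannot be traded for time growth pointwise. The paper instead applies Hausdorff--Young and then H\"older with exponent $\frac{2q'}{2-q'}=\frac{2q}{q-2}$, so that after spending $s=|\gamma_2|+(j-\ell)(\alpha-\delta)+\frac{\delta-\theta}{\beta}$ derivatives on $\psi$ one is left with the norm $\|\,|\xi|^{-\frac{\delta-\theta}{\beta}}e^{t\,\mathrm{Re}\lambda_\pm}\|_{L^{2q/(q-2)}(|\xi|\ge1)}$: convergence of this integral at $|\xi|=\infty$ comes from the negative power --- this is the true role of the constraint $0<\beta<\frac{\delta-\theta}{n}\frac{2q}{(q-2)_+}$, not merely making $g$ decay --- and the scaling of the integral, not a sup, produces the factor $(1+t)^{\frac{n}{2(\delta-\theta)}\left(\frac12-\frac1q\right)-\frac1{2\beta}}=g(t)$. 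Your $q=2$ case (Plancherel), the exponential decay when $\delta\le\theta$, and the reduction of $E_1$ to $K_1$ via $\hat E_1=(1+|\xi|^{2\delta})^{-1}\hat K_1$ all match the paper and are fine.
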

\begin{proof}    For small frequencies $|\xi|\leq 1$ we have that
	\[\lambda_{+} \approx -|\xi|^{2(\alpha-\theta)}, \qquad \lambda_{-} \approx -|\xi|^{2\theta}, \qquad \lambda_{+}-\lambda_{-}\approx  |\xi|^{2\theta},\]
	and if $t|\xi|^{2\theta}\leq 1$ we have
	\[|\hat{K}_1(t,\xi)|\lesssim |\xi|^{-2\theta}e^{t\lambda_{-}}
	\left(e^{t(\lambda_{+}-\lambda_{-})}-1\right)\lesssim t e^{t\lambda_{-}},\]
	whereas for $t|\xi|^{2\theta}\geq 1$
	\[|\hat{K}_1(t,\xi)| \lesssim |\xi|^{-2\theta} e^{t\lambda_{+}}.\]
	For any~$\eta\in[1,2]$, we define~$\eta'=\eta/(\eta-1)$, its H\"older conjugate, and~$r\in[2,\infty]$ by
	\begin{equation}\label{eq:r}
	\frac1r = \frac1{q'}-\frac1{\eta'} = \frac1\eta-\frac1q,
	\end{equation}
	where $q'=q/(q-1)$.
	Now, Hausdorff-Young inequality comes into play
		\[
\| (-\triangle)^{\frac{k}{2}}\mathfrak F^{-1}(\chi_0\hat{K}_1(t,\cdot)\hat \psi)\|_{L^{q}}\lesssim\|\,|\xi|^{k} \hat{K}_1(t,\cdot)\hat \psi\|_{L^{q'}(|\xi|\leq 1)}
	\]
and we estimate $L^{q'}$ in the regions $t|\xi|^{2\theta}\leq 1$ and  $t^{-\frac1{2\theta}}\leq |\xi|\leq 1$:
\begin{align*}
	\|\,|\xi|^{k} \hat{K}_1(t,\cdot)\hat \psi\|_{L^{q'}(t|\xi|^{2\theta}\leq 1)}
	& \lesssim t\,\|\,|\xi|^{k}e^{t\lambda_{-}}\|_{L^r(t|\xi|^{2\theta}\leq 1)}\,\|\hat \psi\|_{L^{\eta'}}\\
	& \lesssim (1+t)^{1-\frac1{2\theta}\left(\frac{n}{r}+k\right)} \,\|\psi\|_{L^\eta},
	\end{align*}
	whereas
	\begin{align*}
		\|\,|\xi|^{k} \hat{K}_1(t,\cdot)\hat \psi\|_{L^{q'}(t^{-\frac1{2\theta}}\leq |\xi|\leq 1)}
		& \lesssim  \|\,|\xi|^{k-2\theta}e^{t\lambda_{+}}\|_{L^r(t^{-\frac1{2\theta}}\leq |\xi|\leq 1)}\,\|\hat \psi\|_{L^{\eta'}}\\
		& \lesssim  (1+t)^{-\frac1{2(\alpha-\theta)}\left(\frac{n}{r}+k-2\theta\right)} \,\|\psi\|_{L^\eta},
		\end{align*}
for $n\left(\frac1{\eta}-\frac1{q}\right)+k-2\theta>0$,
\begin{align*}
		\|\,|\xi|^{k} \hat{K}_1(t,\cdot)\hat \psi\|_{L^{q'}(t^{-\frac1{2\theta}}\leq |\xi|\leq 1)}
		& \lesssim  \|\,|\xi|^{k-2\theta}e^{t\lambda_{+}}\|_{L^r(t^{-\frac1{2\theta}}\leq |\xi|\leq 1)}\,\|\hat \psi\|_{L^{\eta'}}\\
		& \lesssim  (1+t)^{1-\frac1{2\theta}\left(\frac{n}{r}+k\right)} \,\|\psi\|_{L^\eta},
		\end{align*}
for $n\left(\frac1{\eta}-\frac1{q}\right)+k-2\theta<0$, and
\begin{align*}
		\|\,|\xi|^{k} \hat{K}_1(t,\cdot)\hat \psi\|_{L^{q'}(t^{-\frac1{2\theta}}\leq |\xi|\leq 1)}
		& \lesssim  \|\,|\xi|^{k-2\theta}e^{t\lambda_{+}}\|_{L^r(t^{-\frac1{2\theta}}\leq |\xi|\leq 1)}\,\|\hat \psi\|_{L^{\eta'}}\\
		& \lesssim   \log(e+t)\,\|\psi\|_{L^\eta},
		\end{align*}
for $n\left(\frac1{\eta}-\frac1{q}\right)+k-2\theta=0$.
%The $\log$ decay appears when we compute the $L^r$ norm in the region $t^{-\frac1{2\theta}}\leq |\xi|\leq 1$.
However, in some cases the last inequality may be improved.
 Indeed, the case $k-2\theta=0$    is immediately,  i.e.,
 \[ \| (-\triangle)^{\frac{k}{2}}\mathfrak F^{-1}(\chi_0\hat{K}_1(t,\cdot)\hat \psi)\|_{L^{2}}  \lesssim   \,\|\psi\|_{L^2}.
 \]
 So, let us suppose that $k-2\theta<0$.  If $2\leq q <\infty$ and $1< \eta < \frac{n}{2\theta-k}$,
 by using the  Riesz potential mapping properties $I_{2\theta-k}f=\mathfrak F^{-1}(|\xi|^{-(2\theta-k)}\hat f)$  we get \cite{Stein}
 %\textcolor{red}{
\begin{align*}
\| (-\triangle)^{\frac{k}{2}}\mathfrak F^{-1}(\chi_0\hat{K}_1(t,\cdot)\hat \psi)\|_{L^{q}}&=\|I_{2\theta-k} (-\triangle)^{\frac{k}{2}}\mathfrak F^{-1}(\chi_0|\xi|^{2\theta-k}\hat{K}_1(t,\cdot)\hat \psi)\|_{L^{q}}\\
&\lesssim\| \mathfrak F^{-1}(\chi_0|\xi|^{2\theta}\hat{K}_1(t,\cdot)\hat \psi)\|_{L^{\eta}}, \qquad \frac1{\eta}-\frac1{q}= \frac{2\theta-k}{n}.
\end{align*}
For  $\beta\in \N^n$  we  may estimate
\[|\partial_\xi^\beta \chi_0|\xi|^{2\theta}\hat{K}_1(t, \xi)| \lesssim |\xi|^{-|\beta|},\]
hence,  Mikhlin-H\"ormander multiplier theorem implies
 \[
\| \mathfrak F^{-1}(\chi_0|\xi|^{2\theta}\hat{K}_1(t,\cdot)\hat \psi)\|_{L^{\eta}}
  \lesssim  \|\psi\|_{L^{\eta}},
\]
and
\[\| (-\triangle)^{\frac{k}{2}}\mathfrak F^{-1}(\chi_0\hat{K}_1(t,\cdot)\hat \psi)\|_{L^{q}}\lesssim  \|\psi\|_{L^{\eta}}.\]
If $q=\infty$ and $\eta>1$,  by taking $s>0$ such that $2\theta-k-1<s< \frac{n}{\eta}$  and applying Lemma 3.1 of \cite{DAE14JDE}
we have
 \begin{align*}
\| (-\triangle)^{\frac{k}{2}}\mathfrak F^{-1}(\chi_0\hat{K}_1(t,\cdot)\hat \psi)\|_{L^{\infty}}&=\|\mathfrak F^{-1}(\chi_0|\xi|^{k+s}\hat{K}_1(t,\cdot)|\xi|^{-s}\hat \psi)\|_{L^{\infty}}\\
		& \lesssim (1+t)^{-\frac1{2(\alpha-\theta)}\left(\frac{n}{\tilde q}+k+s-2\theta\right)} \|\, I_{s}\psi\|_{L^{\tilde q}}\\
  &\lesssim  \|\psi\|_{L^{\eta}}, \qquad \frac1{\eta}-\frac1{\tilde q}= \frac{s}{n},
\end{align*}
thanks to $\frac{n}{\eta}+k-2\theta=0$.
%}
%Finally, for $\eta=1$ and $2\leq q <\infty$, we may restrict our analysis to small frequencies $t^{-\frac1{2\theta}}\leq |\xi|\leq 1$.
%Applying the  Riesz potential  $I_{s}$ with $2\theta-k-s<1$ and $p\in (1, \frac{n}{s})$ we have
%\begin{align*}
%|||D|^k K_1(t, \cdot)\ast \psi||_{L^q(t^{-\frac1{2\theta}}\leq |\xi|\leq 1)}&
%=||I_{s}|D|^{k+s} K_1(t, \cdot)\ast \psi||_{L^q(t^{-\frac1{2\theta}}\leq |\xi|\leq 1)}\\
%		& \lesssim  \|\, |D|^{k+s} K_1(t, \cdot)\ast \psi||_{L^p(t^{-\frac1{2\theta}}\leq |\xi|\leq 1)}
%  \qquad \frac1{p}-\frac1{ q}= \frac{s}{n}.
%\end{align*}
%Now, thanks to $2\theta-k-s<1$,  we may directly apply Lemma
%3.1 in \cite{DAE14JDE} to conclude
%\[\|\, |D|^{k+s} K_1(t, \cdot)\ast \psi||_{L^p(t^{-\frac1{2\theta}}\leq |\xi|\leq 1)}\lesssim (1+t)^{-\frac1{2(\alpha-\theta)}\left(n\left(1-\frac{1}{p}\right)+k+s-2\theta\right)}
%\|\psi\|_{L^{1}}\lesssim \|\psi\|_{L^{1}}.\]
		Furthermore, we have
		\begin{align*}
		\|\,|\xi|^{k} \hat{K}_{0}(t,\cdot)\hat \psi\|_{L^{q'}(|\xi|\leq 1)}
		& \lesssim \|\,|\xi|^{k}e^{t\lambda_{+}}\|_{L^r(|\xi|\leq 1)}\,\|\hat \psi\|_{L^{\eta'}}\\
		& \lesssim (1+t)^{-\frac1{2(\alpha-\theta)}\left(\frac{n}{r}+k\right)} \,\|\psi\|_{L^\eta}.
		\end{align*}
	For  time derivatives  of the kernels the desired  estimates follows thanks to
	\[|\partial_t\hat{K}_0(t,\xi)|=|\lambda_{+}\lambda_{-}|\left|\dfrac{e^{t\lambda_{-}}
		-e^{t\lambda_{+}}}{\lambda_{+}-\lambda_{-}}\right|\lesssim |\xi|^{2(\alpha-\theta)}e^{t\lambda_{+}}\]
	\[|\partial_t\hat{K}_1(t,\xi)|=\left|\dfrac{\lambda_{-}e^{t\lambda_{-}}
		-\lambda_{+}e^{t\lambda_{+}}}{\lambda_{+}-\lambda_{-}}\right|\lesssim e^{t\lambda_{-}} + |\xi|^{2(\alpha-2\theta)}e^{t\lambda_{+}}.\]
	In the low frequency region the estimates obtained for $\hat{E}_1$ and $\hat{K}_1$ are the same, since $\hat{E}_1 \approx \hat{K}_1$ for all $t \geq 0$.
	
	At high frequencies the roots of the full symbol are complex-valued, and
	\begin{equation}\label{eq:autovalores-alta} Re \lambda_{\pm} \approx -|\xi|^{2(\theta-\delta)}, \qquad |\lambda_{\pm}| \approx |\xi|^{\alpha-\delta}, \qquad |\lambda_{+}-\lambda_{-}| \approx  |\xi|^{\alpha-\delta}, \qquad |\xi| \rightarrow \infty.\end{equation}
Using the equivalences \eqref{eq:autovalores-alta}, we have
		\[|\partial_t^j\hat{K}_\ell(t,\xi)|
		\lesssim  |\xi|^{(j-\ell)(\alpha-\delta)}e^{t Re \lambda_{\pm}}.\]
	Hence, if $\theta\geq \delta$ and $u_0\in H^{\alpha}(\R^n), u_1\in H^{\delta}(\R^n)$, we have an exponential decay for $\||\xi|^b \partial_t^j \hat u(t,\cdot)\|_{L^q(|\xi|\geq 1)}$ for~$0\leq b+ (\alpha-\delta)j\leq \alpha$,  $j=0,1$.

	On the other hand, if $\delta>\theta$, for $\beta>0$ we may estimate
	\begin{align*} e^{-t |\xi|^{2(\theta-\delta)}}&= t^{-\frac1{2\beta}} ( t |\xi|^{2(\theta-\delta)})^\frac1{2\beta}e^{-t |\xi|^{2(\theta-\delta)}}|\xi|^{\frac{\delta-\theta}{\beta}}\\
	& \lesssim  t^{-\frac1{2\beta}} |\xi|^{\frac{\delta-\theta}{\beta}}, \qquad t>0
	\end{align*}
	so, under additional regularity $\frac{\delta-\theta}{\beta}$ on initial data, we have a polynomial decay
	$(1+t)^{-\frac1{2\beta}}$ for the $L^2$ norm
		\begin{align*}
		||\,|\xi|^{k}\partial_t^j \hat{K}_\ell(t, \cdot) \hat{\psi}||_{L^2(|\xi|\geq 1)}&\lesssim || \,|\xi|^{k+(j-\ell)(\alpha-\delta)}e^{t Re \lambda_{\pm}} \hat{\psi}||_{L^2(|\xi|\geq 1)}\\
		&\lesssim ||\, |\xi|^{k+(j-\ell)(\alpha-\delta)+\frac{\delta-\theta}{\beta}}t^{-\frac{1}{2\beta}}\hat{\psi}||_{L^2(|\xi|\geq 1)} \\
		&\lesssim t^{-\frac{1}{2\beta}} ||\psi||_{H^{s}},
		\end{align*}
with $s=s_j$ if $\ell = 0$ and $s=r_j$ if $\ell = 1$.
			For $q\geq 2$ and
$q'=q/(q-1)$, we may use again
	 Hausdorff-Young inequality
		\[
\| (-\triangle)^{\frac{k}{2}}\partial_t^j\mathfrak F^{-1}(\chi_1\hat{K}_\ell(t,\cdot)\hat \psi)\|_{L^{q}}=\|\,|\xi|^{k}\partial_t^j \hat{K}_\ell(t,\cdot)\hat \psi\|_{L^{q'}(|\xi|\geq 1)},
	\]
$\ell,j = 0,1$.
Since $q' \in [1,2)$ and $n < \left[ -k + s - (j-\ell)(\alpha - \delta) \right]\,\frac{2q'}{2-q'}$
		\begin{align*}||\, |\xi|^k \partial_t^j \hat{K}_{\ell}(t, \cdot) \hat{\psi}||_{L^{q'}(|\xi|\geq 1)}&\lesssim || \,|\xi|^{k+(j-\ell)(\alpha-\delta)}e^{t Re \lambda_{\pm}} \hat{\psi}||_{L^{q'}(|\xi|\geq 1)}\\
		&\lesssim ||\, |\xi|^{k-s+(j-\ell)(\alpha-\delta)}e^{t Re \lambda_{\pm}} ||_{L^\frac{2q'}{2-q'}(|\xi|\geq 1)} ||\,|\xi|^{s}\hat{\psi}||_{L^2(|\xi|\geq 1)}\\
		&\lesssim (1+t)^{\frac{1}{2(\delta-\theta)} \left[ n \left( \frac{2-q'}{2q'} \right) + k -s + (j-\ell)(\alpha - \delta) \right]} ||\psi||_{H^{s}}.
		\end{align*}
		Now calling $s=\frac{\delta-\theta}{\beta}+k+(j-\ell)(\alpha-\delta)$ (with $\beta>0$), if $n<\frac{2q'(\delta - \theta)}{(2-q')\beta}$ and $q' \in [1,2)$, it follows that
		\[|| \,|\xi|^k \partial_t^j \hat{K}_{\ell}(t, \cdot) \hat{\psi}||_{L^{q'}(|\xi|\geq 1)} \lesssim  (1+t)^{\frac{n(2-q')}{4q'(\delta-\theta)}-\frac1{2\beta}} ||\psi||_{H^{s}}\lesssim  g(t)||\psi||_{H^{s}}, \qquad \ell = 0,1,\]
with $s=s_j$ if $\ell = 0$ and $s=r_j$ if $\ell = 1$.
			\end{proof}
\begin{Rem}\label{usefulrem0}
We assume the hypotheses in  Theorem \ref{thm:L2p}, then  solutions to \eqref{eq:CPlinearinho}, with $f\equiv 0$, satisfy \eqref{eq:decayL2anew}-\eqref{eq:decayL2bnew}. Indeed, the assumed maximum regularity $H^{r}$ for the second data, with $r=\max\{r_0, r_1\}$,   is in order that
\[|||D|^{\alpha} K_1(t, \cdot)\ast \psi||_{L^2} \lesssim
(1+t)^{-\frac{1}{2(\alpha-\theta)}\left(n\left(\frac1{m}-\frac12\right)
	+\alpha-2\theta\right)}||\psi||_{ H^{r_0}\cap L^m}\]
for $n \left( \frac{1}{m} - \frac{1}{q} \right) \leq 2 \theta$, i.e.,
\[ r_0=\delta+\frac{\delta-\theta}{\beta} =\delta+ \frac{\delta-\theta}{\alpha-\theta} \left(n\left(\frac1{m}-\frac12\right)
+\alpha-2\theta\right) \leq \delta+ \frac{\delta-\theta}{\alpha-\theta} \left(n\left(\frac1{m}-\frac1q\right)
+\alpha-2\theta\right) \leq \delta+ \frac{\delta-\theta}{\alpha-\theta}\,\alpha \leq 2\delta,\]
 and
\[|| \partial_t K_1(t, \cdot)\ast \psi||_{L^{q}}
\lesssim (1+t)^{- \frac{n}{2\theta}\left(\frac{1}{m} - \frac{1}{q} \right)}||\psi||_{ H^{r_1}\cap L^m},\]
i.e.,
\[ r_1=\frac{\delta-\theta}{\beta}=(\delta-\theta)\left[\dfrac{n}{\delta - \theta} \left( \dfrac{1}{2} - \dfrac{1}{q} \right) + \dfrac{n}{\theta} \left( \dfrac{1}{m} - \dfrac{1}{q} \right)\right]
\leq n \left( \dfrac{1}{m} - \dfrac{1}{q} \right) + \dfrac{n(\delta-\theta)}{\theta} \left( \dfrac{1}{m} - \dfrac{1}{q} \right) \leq 2\delta.\]
This implies the required  regularity $H^{s}$  for the first data, with $s=\max\{s_0, s_1\}$, since $s_j=r_j + \alpha -\delta$, $j=0,1$.
\end{Rem}
\begin{Rem}\label{usefulremnew}
		We assume the hypotheses in  Theorem \ref{thm:theta0}, then solutions to \eqref{eq:CPlinearinho}, with $f\equiv 0$, satisfy estimates \eqref{eq:decayL2theta02} and \eqref{eq:decayL2theta03}. Indeed, if $(u_0,u_1) \in H^{s_1}(\R^n) \times H^{r_1}(\R^n)$, with $s_1=\alpha+\delta$ and $r_1=2\delta$, we have
		\begin{align}\|u_t(t,\cdot)\|_{L^q} & \lesssim (1+t)^{-\frac{n}{2\alpha} \left( \frac{1}{2}-\frac{1}{q} \right)-1} \left(\|u_0\|_{L^2} + \|u_1\|_{L^2} \right) + (1+t)^{\frac{n}{2\delta} \left( \frac{1}{2}-\frac{1}{q} \right)-\frac{1}{2\beta}} \left(\|u_0\|_{H^{s_1}} + \|u_1\|_{H^{r_1}} \right) \nonumber \\
		& \lesssim (1+t)^{\frac{n}{2\delta} \left( \frac{1}{2}-\frac{1}{q} \right)-1} \left(\|u_0\|_{H^{s_1} \cap L^2} + \|u_1\|_{H^{r_1} \cap L^2} \right) \nonumber
		\end{align}
	because $r_1 = \frac{\delta}{\beta}$, i.e., $\beta = \frac12$ implies
	$\frac{n}{2\delta} \left( \frac{1}{2}-\frac{1}{q} \right)-\frac{1}{2\beta} \geq -\frac{n}{2\alpha} \left( \frac{1}{2}-\frac{1}{q} \right)-1.$
	
	On the other hand, if $(u_0,u_1) \in H^{s_0}(\R^n) \times H^{r_0}(\R^n)$, with $s_0=\alpha+\delta$ and $r_0=2\delta$, we have
	\begin{align}\|u(t,\cdot)\|_{L^q} & \lesssim (1+t)^{-\frac{n}{2\alpha} \left( \frac{1}{2}-\frac{1}{q} \right)} \left(\|u_0\|_{L^2} + \|u_1\|_{L^2} \right) + (1+t)^{\frac{n}{2\delta} \left( \frac{1}{2}-\frac{1}{q} \right)-\frac{1}{2\beta}} \left(\|u_0\|_{H^{s_0}} + \|u_1\|_{H^{r_0}} \right), \nonumber
	\end{align}
	where $\beta = \frac{\delta}{\alpha+\delta}$.
	We note that, depending on the the parameters $\alpha, \delta$ and in the space dimension $n$,  the last estimate may be determined at  low frequency or at high frequency.
 %estimate $(1+t)^{\frac{n}{2\delta} \left( \frac{1}{2}-\frac{1}{q} \right)-\frac{1}{2\beta}}$, where $s_0 = \alpha + \delta = \frac{\delta}{\beta}$.
\end{Rem}

\section{ Proof of Theorems \ref{thm:L2p} to  \ref{thm:sp} }

By Duhamel's principle, a function $u\in Z$, where~$Z$ is a suitable space, is a solution to~\eqref{eq:CP0} if, and only if, it satisfies the equality
\begin{equation}\label{eq:fixedpoint}
u(t,x) =  u^{\mathrm{lin}}(t,x) + \int_0^t E_1(t-s, x) \ast \, f(u_t(s,x))\, ds\,, \qquad \text{in~$Z$,}
\end{equation}
with $f(u_t(s,x))=|u_t(s,x)|^p$  and
\[ u^{\mathrm{lin}} (t,x) \doteq K_0(t,x) \ast u_0(x) + K_1(t,x) \ast u_1(x) \,, \]
is the solution to the linear Cauchy problem~\eqref{eq:CPlinear}.
The proof of our global existence results is based on the following scheme. We  define an appropriate data function space
\begin{align}
\label{dom:A}
\mathcal A  \doteq  \left(H^{\alpha+\delta}(\R^n)\cap L^{m}(\R^n)\right) \times \left(H^{2\delta}(\R^n)\cap L^{m}(\R^n)\right),
\end{align}
 and an evolution space for solutions
\begin{align}
\label{eq:Xsp}
Z(T) \doteq C([0, T], H^{\alpha}(\R^n)\cap L^{q}(\R^n)) \cap C^1([0, T], L^2(\R^n)\cap L^{q}(\R^n)),
\end{align}
equipped with a  norm relate to the estimates  of solutions to the linear problem \eqref{eq:CPlinear} such that
\begin{equation}\label{eq:ubasic}
\|u^{\mathrm{lin}}\|_{Z} \leq C\,\|(u_0, u_1)\|_{\mathcal A}.
\end{equation}

We define the operator~$F$ such that, for any~$u\in Z$,
\begin{equation}\label{eq:G}\nonumber
Fu(t,x) \doteq \int_0^t E_1(t-s,x)\ast f(u_t(s,x))\, ds\,,
\end{equation}
then we prove the estimates
\begin{align}
\label{eq:well}
\|Fu\|_{Z}
    & \leq C\|u\|_{Z}^p\,, \\
\label{eq:contraction}
\|Fu-Fv\|_{Z}
    & \leq C\|u-v\|_{Z} \bigl(\|u\|_{Z}^{p-1}+\|v\|_{Z}^{p-1}\bigr)\,.
\end{align}
By standard arguments, since $u^{\mathrm{lin}}$ satisfies~\eqref{eq:ubasic} and~$p>1$, from~\eqref{eq:well} it follows that~$u^{\mathrm{lin}}+F$ maps balls of~$Z$ into balls of~$Z$, for small data in~$\mathcal{A}$, and that estimates \eqref{eq:well}-\eqref{eq:contraction} lead to the existence of a unique solution to~\eqref{eq:fixedpoint}, that is, $u=u^{\mathrm{lin}}+Fu$, satisfying~\eqref{eq:ubasic}. We simultaneously gain a local and a global existence result.

\medskip

The information that~$u\in Z$ plays a fundamental role to estimate~$f(u_t(s,\cdot))$ in suitable norms. We will employ the following well-known result (for instance, see \cite{DAE17NA}).
\begin{lemma}\label{lem:integral}
Let~$\kappa\leq 1$. Then it holds
\[%\begin{equation}\label{eq:integralest}
\int_0^t (1+t-s)^{-\kappa}\,\,(1+s)^{-\mu}\,ds \lesssim \left\{ \begin{array}{ll} (1+t)^{-\kappa} \hspace{2cm} \text{if } \mu >1\\ (1+t)^{-\kappa} \log(e+t)\hspace{0,5cm} \text{if } \mu =1.\end{array}\right.
\]%\end{equation}
\end{lemma}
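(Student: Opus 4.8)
The plan is to prove Lemma~\ref{lem:integral} by splitting the integral over the two natural regions where each factor dominates. First I would write
\[
\int_0^t (1+t-s)^{-\kappa}(1+s)^{-\mu}\,ds = \int_0^{t/2}(1+t-s)^{-\kappa}(1+s)^{-\mu}\,ds + \int_{t/2}^t (1+t-s)^{-\kappa}(1+s)^{-\mu}\,ds,
\]
since on the first piece we have $1+s \leq 1+t \lesssim 1+t-s$ (as $s \leq t/2$ forces $t-s \geq t/2$, whence $1+t-s \sim 1+t$), while on the second piece the roles reverse and $1+s \sim 1+t$.

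On the region $s\in[0,t/2]$ the factor $(1+t-s)^{-\kappa}$ is comparable to $(1+t)^{-\kappa}$ and may be pulled out, leaving $(1+t)^{-\kappa}\int_0^{t/2}(1+s)^{-\mu}\,ds$. Here the elementary estimate for $\int_0^{T}(1+s)^{-\mu}\,ds$ is the crux: for $\mu>1$ the integral converges and is bounded by a constant, giving the clean bound $(1+t)^{-\kappa}$; for $\mu=1$ it produces a logarithm, namely $\int_0^{t/2}(1+s)^{-1}\,ds \lesssim \log(e+t)$, which accounts for the logarithmic factor in the second case. On the region $s\in[t/2,t]$ I would instead pull out $(1+s)^{-\mu}\sim (1+t)^{-\mu}$ and integrate the remaining factor, $\int_{t/2}^t (1+t-s)^{-\kappa}\,ds$; after the change of variables $\sigma=t-s$ this becomes $\int_0^{t/2}(1+\sigma)^{-\kappa}\,d\sigma$, which since $\kappa\leq 1$ is at most $C(1+t)^{1-\kappa}$ (with a logarithm if $\kappa=1$). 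Multiplying by $(1+t)^{-\mu}$ yields $(1+t)^{1-\kappa-\mu}$ (times a possible log), and since $\mu\geq 1$ this is dominated by $(1+t)^{-\kappa}$, so this region never worsens the bound.

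The main point to be careful about—rather than a genuine obstacle—is simply tracking which exponent saturates in each regime and verifying that the logarithm arises \emph{only} in the $\mu=1$ case from the first region, while the second region is always absorbed into $(1+t)^{-\kappa}$ thanks to $\mu\geq 1$ together with $\kappa\leq 1$. I would conclude by combining the two regional estimates, noting that the dominant contribution is $(1+t)^{-\kappa}$ when $\mu>1$ and $(1+t)^{-\kappa}\log(e+t)$ when $\mu=1$, which is exactly the claimed dichotomy. Since this is a standard and purely one-dimensional computation, no further machinery is needed.
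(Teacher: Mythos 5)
Your proof is correct. Note, though, that the paper itself does not prove Lemma~\ref{lem:integral} at all: it is quoted as a well-known result with a pointer to \cite{DAE17NA}, so there is no internal proof to compare against. Your splitting of the integral at $s=t/2$, using $1+t-s\sim 1+t$ on $[0,t/2]$ and $1+s\sim 1+t$ on $[t/2,t]$, is precisely the standard argument (and is the same device the authors use later, inside the proof of Theorem~\ref{thm:sp}, to handle the critical case), so your write-up simply supplies the elementary computation the paper delegates to a reference. One small imprecision worth fixing: in the edge case $\kappa=\mu=1$ the second region is \emph{not} absorbed into $(1+t)^{-\kappa}$ alone, since $\int_{t/2}^t(1+t-s)^{-1}\,ds\sim\log(e+t)$ gives a contribution $(1+t)^{-1}\log(e+t)$; this still lies within the claimed bound for $\mu=1$, which carries the logarithm, so the conclusion is unaffected, but your sentence asserting that the second region is ``always absorbed into $(1+t)^{-\kappa}$'' should be weakened accordingly.
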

\begin{proof}(Theorem \ref{thm:L2p})
We have to prove \eqref{eq:ubasic}, \eqref{eq:well} and \eqref{eq:contraction}, with $\mathcal{A}$ as in \eqref{dom:A} and $Z(T)$ as in \eqref{eq:Xsp} for all $q \geq 2p$, equipped with the norm
\begin{align*}
\nonumber
\|u\|_{Z(T)}
    \doteq& \sup_{t\in[0,T]} \Bigl\{
    (1+t)^{-1+\frac{n}{2\theta}\left(\frac{1}{m}- \frac{1}{2}\right)} \|u(t,\cdot)\|_{L^2}
    +(1+t)^{\frac{1}{2(\alpha-\theta)}\left( n \left(\frac{1}{m}- \frac{1}{2}\right) +\alpha-2\theta\right)} \||D|^{\alpha} u(t,\cdot)\|_{L^2}
    \\
  %  \label{eq:XkspL2}
    & \quad + (1+t)^{-1+\frac{n}{2 \theta}\left(\frac{1}{m}- \frac{1}{q}\right)} \|u(t,\cdot)\|_{L^{q}} + (1+t)^{\frac{n}{2\theta}\left(\frac{1}{m}- \frac{1}{2}\right)}\|u_t(t,\cdot)\|_{L^{2}}
    + (1+t)^{\frac{n}{2 \theta}\left(\frac{1}{m}- \frac{1}{q}\right)} \|u_t(t,\cdot)\|_{L^{q}} \Bigr\}.
\end{align*}
Thanks to Theorem \ref{teo>} and Remark \ref{usefulrem0}, $u^{\mathrm{lin}} \in Z(T)$ and it satisfies \eqref{eq:ubasic}.

Let us prove \eqref{eq:well}. We omit the proof of~\eqref{eq:contraction}, since it is analogous to the proof of~\eqref{eq:well}.

Let $u\in Z(T)$.
Using  Theorem \ref{teo>} with $\eta=m$ and Remark \ref{usefulrem0}, for $j=0, 1$ we have
\begin{eqnarray*}
\| \partial_t^j E_1(t-s,\cdot)\ast f(u_t(s,\cdot))\|_{L^2} &\lesssim& (1+t-s)^{1-j-\frac{n}{2\theta}\left(\frac{1}{m}- \frac{1}{2}\right)}
\|f(u_t(s,\cdot))\|_{L^2\cap L^m},
\\
\| \partial_t^j E_1(t-s,\cdot)\ast f(u_t(s,\cdot))\|_{L^{q}} &\lesssim& (1+t-s)^{1-j-\frac{n}{2\theta}\left(\frac{1}{m}-\frac{1}{q}\right)} \|f(u_t(s,\cdot))\|_{L^2\cap L^m},
\\
\| |D|^\alpha E_1(t-s,\cdot)\ast f(u_t(s,\cdot))\|_{L^2} &\lesssim& (1+t-s)^{-\frac{1}{2(\alpha-\theta)}\left( n \left(\frac{1}{m}- \frac{1}{2}\right) +\alpha-2\theta\right)}
\|f(u_t(s,\cdot))\|_{L^2\cap L^m}.
\end{eqnarray*}
Condition \eqref{m0} implies that $m\left(1+ \frac{2m\theta}{n} \right)\geq 2$, hence, for $\kappa\geq m$ and $ 1+\frac{2m\theta}{n}<p\leq \frac{q}2 $,
by interpolation we may estimate
\[ \|f(u_t(s,\cdot))\|_{L^\kappa}= \|u_t(s, \cdot)\|_{L^{\kappa p}}^p  \lesssim (1+s)^{-\frac{np}{2\theta}\left(\frac{1}{m}-\frac1{\kappa p}\right)} \|u\|^p_{Z(T)},\]
 and Lemma \ref{lem:integral} implies
 \begin{eqnarray*}
 \| \partial_t^jFu(t, \cdot)\|_{L^2}&\lesssim& \| u\|_{Z(T)}^p\int_0^t (1+t-s)^{1-j-\frac{n}{2\theta}\left(\frac{1}{m}- \frac{1}{2}\right)}(1+s)^{-\frac{np}{2\theta}\left(\frac{1}{m}-\frac1{m p}\right)}ds\\
 &\lesssim&
(1+t)^{1-j-\frac{n}{2\theta}\left(\frac{1}{m}- \frac{1}{2}\right)}\| u\|_{Z(T)}^p,
\end{eqnarray*}
\begin{eqnarray*} \| \partial_t^jFu(t, \cdot)\|_{L^{q}} &\lesssim& \| u\|_{Z(T)}^p\int_0^t (1+t-s)^{1-j-\frac{n}{2\theta}\left(\frac{1}{m}-\frac{1}{q}\right)}(1+s)^{-\frac{np}{2\theta}\left(\frac{1}{m}-\frac1{m p}\right)}ds \\ &\lesssim&
(1+t)^{1-j-\frac{n}{2\theta}\left(\frac{1}{m}-\frac{1}{q}\right)}\| u\|_{Z(T)}^p,
\end{eqnarray*}
and
\begin{eqnarray*}
 \| |D|^\alpha Fu(t, \cdot)\|_{L^2}&\lesssim& \| u\|_{Z(T)}^p\int_0^t  (1+t-s)^{-\frac{1}{2(\alpha-\theta)}\left( n \left(\frac{1}{m}- \frac{1}{2}\right) +\alpha-2\theta\right)}(1+s)^{-\frac{np}{2\theta}\left(\frac{1}{m}-\frac1{m p}\right)}ds\\
 &\lesssim&
 (1+t)^{-\frac{1}{2(\alpha-\theta)}\left( n \left(\frac{1}{m}- \frac{1}{2}\right) +\alpha-2\theta\right)}\| u\|_{Z(T)}^p,
\end{eqnarray*}
thanks again to $p> 1+\frac{2m\theta}{n} $ and
 $q \leq \frac{n\,m}{(n-2m\theta)_+}$.
%The limit case  $p= p_c\doteq 1+\frac{4\theta}{n}$ may be treated as in the proof of Theorem \ref{thm:L2}, i.e.,
 %applying Theorem \ref{teo>} with$ \eta<2$ satisfying $p\eta\geq 2$, for instance, with $\eta= 2 - \frac{8\theta}{n+4\theta}$.
\end{proof}
\begin{proof}(Theorem \ref{thm:theta0})
 We have to prove \eqref{eq:ubasic}, \eqref{eq:well} and \eqref{eq:contraction}, with $\mathcal{A}$ as in \eqref{dom:A} for $m=2$ and $Z(T)$ as in \eqref{eq:Xsp} for all $q \geq 2p$,
equipped with the norm
\begin{align*}
\nonumber
\|u\|_{Z(T)}
    \doteq \sup_{t\in[0,T]} & \Bigl\{
     \|u(t,\cdot)\|_{L^2}
    +(1+t)^{\frac{1}{2}} \||D|^{\alpha} u(t,\cdot)\|_{L^2}+ (1+t)^{\min\left\{\frac{n}{2\alpha}\left(\frac12- \frac{1}{q}\right), \frac{\alpha+\delta}{2\delta}- \frac{n}{2\delta}\left(\frac12- \frac{1}{q}\right)\right\}} \|u(t,\cdot)\|_{L^{q}}
    \\
     & \qquad  + (1+t)\|u_t(t,\cdot)\|_{L^{2}}
    + (1+t)^{1-\frac{n}{2\delta}\left(\frac12-\frac1q\right)} \|u_t(t,\cdot)\|_{L^{q}} \Bigr\}.
\end{align*}
We only discuss the proof  of \eqref{eq:well}.  Let $u\in Z(T)$.
Using  Theorem \ref{teo>} with $\eta=2$, for $j=0, 1$ we have
\begin{eqnarray*}
\| \partial_t^j E_1(t-s,\cdot)\ast f(u_t(s,\cdot))\|_{L^2} &\lesssim& (1+t-s)^{-j}
\|f(u_t(s,\cdot))\|_{L^2},
\\
\| |D|^\alpha E_1(t-s,\cdot)\ast f(u_t(s,\cdot))\|_{L^2}&\lesssim& (1+t-s)^{-\frac{1}{2}}
\|f(u_t(s,\cdot))\|_{L^2},
\\
\|  E_1(t-s,\cdot)\ast f(u_t(s,\cdot))\|_{L^{q}} &\lesssim& (1+t-s)^{-\min\left\{\frac{n}{2\alpha}\left(\frac12- \frac{1}{q}\right), \frac{\alpha+\delta}{2\delta}- \frac{n}{2\delta}\left(\frac12- \frac{1}{q}\right)\right\}} \|f(u_t(s,\cdot))\|_{L^2},
\\	
\| \partial_t E_1(t-s,\cdot)\ast f(u_t(s,\cdot))\|_{L^{q}} &\lesssim & (1+t)^{\frac{n}{2\delta}\left(\frac12-\frac1q\right)-1}\|f(u_t(s,\cdot))\|_{L^2}.
\end{eqnarray*}
By interpolation and  the definition of $\| \cdot \|_{Z(T)}$ we have
\[ \|f(u_t(s,\cdot))\|_{L^2}= \|u_t(s, \cdot)\|_{L^{2p}}^p  \lesssim (1+s)^{\frac{n}{4\delta}\left(p-1\right)-p} \|u\|^p_{Z(T)}.\]
 Lemma \ref{lem:integral} implies
 \begin{eqnarray*}
 \| \partial_t^jFu(t, \cdot)\|_{L^2}&\lesssim& \| u\|_{Z(T)}^p\int_0^t (1+t-s)^{-j}(1+s)^{\frac{n}{4\delta}\left(p-1\right)-p}ds\\
 &\lesssim&
(1+t)^{-j}\| u\|_{Z(T)}^p,
\end{eqnarray*}
\begin{eqnarray*}
	\| |D|^\alpha Fu(t, \cdot)\|_{L^2}&\lesssim& \| u\|_{Z(T)}^p\int_0^t  (1+t-s)^{-\frac{1}{2}}(1+s)^{\frac{n}{4\delta}\left(p-1\right)-p}ds\\
	&\lesssim&
	(1+t)^{-\frac12}\| u\|_{Z(T)}^p,
\end{eqnarray*}
 \begin{eqnarray*}
 \| Fu(t, \cdot)\|_{L^{q}}&\lesssim& \| u\|_{Z(T)}^p\int_0^t (1+t-s)^{-\min\left\{\frac{n}{2\alpha}\left(\frac12- \frac{1}{q}\right), \frac{\alpha+\delta}{2\delta}- \frac{n}{2\delta}\left(\frac12- \frac{1}{q}\right)\right\}}(1+s)^{\frac{n}{4\delta}\left(p-1\right)-p}ds\\
 &\lesssim&
(1+t)^{-\min\left\{\frac{n}{2\alpha}\left(\frac12- \frac{1}{q}\right), \frac{\alpha+\delta}{2\delta}- \frac{n}{2\delta}\left(\frac12- \frac{1}{q}\right)\right\}}\| u\|_{Z(T)}^p,
\end{eqnarray*}
and
 \begin{eqnarray*}
 \| \partial_t Fu(t, \cdot)\|_{L^{q}}&\lesssim& \| u\|_{Z(T)}^p\int_0^t (1+t-s)^{\frac{n}{2\delta}\left(\frac12-\frac1q\right)-1}(1+s)^{\frac{n}{4\delta}\left(p-1\right)-p}ds\\
 &\lesssim&
(1+t)^{\frac{n}{2\delta}\left(\frac12-\frac1q\right)-1}\| u\|_{Z(T)}^p,
\end{eqnarray*}
thanks to  $p>1$ and $n< 4\delta$.
\end{proof}
\begin{proof}(Theorem \ref{thm:sp})
We have to prove \eqref{eq:ubasic}, \eqref{eq:well} and \eqref{eq:contraction}, with $\mathcal{A}$ as in \eqref{dom:A} and $Z(T)$ as in \eqref{eq:Xsp} for all $q \geq 2p$, equipped with the norm
		\begin{align}
		\nonumber
		&\|u\|_{Z(T)}
		\doteq  \sup_{t\in[0,T]} \Bigl\{
		(1+t)^{-1+\frac{n}{2\theta}\left(\frac{1}{m}- \frac{1}{2}\right)} \|u(t,\cdot)\|_{L^2}
		+(1+t)^{\frac{1}{2(\alpha-\theta)}\left( n \left(\frac{1}{m}- \frac{1}{2}\right) +\alpha-2\theta\right)} h(t) \||D|^{\alpha} u(t,\cdot)\|_{L^2}
		\\
		\label{eq:XkspLm}
		&\qquad + (1+t)^{-1+\frac{n}{2 \theta}\left(\frac{1}{m}- \frac{1}{q}\right)} \|u(t,\cdot)\|_{L^{q}} + (1+t)^{\frac{n}{2\theta}\left(\frac{1}{m}- \frac{1}{2}\right)}\|u_t(t,\cdot)\|_{L^{2}}
		+ (1+t)^{\frac{n}{2 \theta}\left(\frac{1}{m}- \frac{1}{q}\right)} \|u_t(t,\cdot)\|_{L^{q}} \Bigr\},
		\end{align}
where $h(t)=(\log(e+t))^{-1}$ if $\alpha>2\theta$ and $h(t)\equiv 1$ if $\alpha=2\theta$.

		Thanks to Theorem \ref{teo>} and Remark \ref{usefulrem0}, $u^{\mathrm{lin}} \in Z(T)$ and it satisfies \eqref{eq:ubasic}.

		Let us prove \eqref{eq:well}. Let $u\in Z(T)$. To consider the case $p= p_c\doteq 1+\frac{2m\theta}{n}$ we have to change  the argument done in Theorem \ref{thm:L2p}.
Condition \eqref{m0} implies that $m\left(1+ \frac{2m\theta}{n} \right)\geq 2$, hence, for $\kappa\geq m$ and $ 1+\frac{2m\theta}{n}=p\leq \frac{q}2 $, by interpolation and \eqref{eq:XkspLm} we have
	\begin{equation}\label{normtheo2.4}		
		\|f(u_t(s,\cdot))\|_{L^\kappa}= \|u_t(s, \cdot)\|_{L^{\kappa p}}^p  \lesssim (1+s)^{-\frac{np}{2\theta}\left(\frac{1}{m}-\frac1{\kappa p}\right)} \|u\|^p_{Z(T)}.
	\end{equation}
		Applying Theorem \ref{teo>} with $1<\eta<m$ such that $\eta\left(1+ \frac{2m\theta}{n} \right)>2$ and $n\left(\frac1{\eta}- \frac{1}{q}\right)\leq 2\theta$, we may estimate
		\begin{eqnarray*}
			\|\partial_t^jFu(t, \cdot)\|_{L^2}&\lesssim& \int_0^t (1+t-s)^{1-j-\frac{n}{2\theta}\left(\frac1{\eta}-\frac12\right)} \|f(u_t(s,\cdot))\|_{L^{2}\cap L^\eta}ds\\
			&\lesssim& \int_0^t (1+t-s)^{1-j-\frac{n}{2\theta}\left(\frac1{\eta}-\frac12\right)} \|u_t(s,\cdot)\|_{L^{2 p_c}\cap L^{\eta p_c}}^{p_c}ds, \qquad j=0,1.
		\end{eqnarray*}
		Using \eqref{normtheo2.4} we conclude
		\[
		\|\partial_t^jFu(t, \cdot)\|_{L^2}\lesssim \| u\|_{Z(T)}^{p_c} \int_0^t (1+t-s)^{1-j-\frac{n}{2\theta}\left(\frac1{\eta}-\frac12\right)} (1+s)^{-\frac{n}{2\theta}\left(\frac{p_c}{m}-\frac1{\eta}\right)}ds.
		\]
		Now we split the integration  interval into $[0, t/2]$ and $[t/2, t]$:
		\begin{eqnarray*}
			\int_0^{\frac{t}2} (1+t-s)^{1-j-\frac{n}{2\theta}\left(\frac1{\eta}-\frac12\right)} (1+s)^{-\frac{n}{2\theta}\left(\frac{p_c}{m}-\frac1{\eta}\right)}ds &\lesssim&
			(1+t)^{1-j-\frac{n}{2\theta}\left(\frac1{\eta}-\frac12\right)}\int_0^{\frac{t}2} (1+s)^{-\frac{n}{2\theta}\left(\frac{p_c}{m}-\frac1{\eta}\right)}ds\\
			&\lesssim&(1+t)^{1-j-\frac{n}{2\theta}\left(\frac{1}{m}-\frac12\right)}.
		\end{eqnarray*}
		On the other hand,  we conclude
		\begin{eqnarray*}
			\int_{\frac{t}2}^t (1+t-s)^{1-j-\frac{n}{2\theta}\left(\frac1{\eta}-\frac12\right)} (1+s)^{-\frac{n}{2\theta}\left(\frac{p_c}{m}-\frac1{\eta}\right)}ds &\lesssim&
			(1+t)^{-\frac{n}{2\theta}\left(\frac{p_c}{m}-\frac1{\eta}\right)}\int_{\frac{t}2}^t (1+t-s)^{1-j-\frac{n}{2\theta}\left(\frac1{\eta}-\frac12\right)}ds\\
			&\lesssim&(1+t)^{1-j-\frac{n}{2\theta}\left(\frac{1}{m}-\frac12\right)}.
		\end{eqnarray*}
		Hence
		\[ \|\partial_t^jFu(t, \cdot)\|_{L^2}\lesssim(1+t)^{1-j-\frac{n}{2\theta}\left(\frac{1}{m}-\frac12\right)} \| u\|_{Z(T)}^{p_c}, \qquad j=0,1.
		\]
		Similarly,
		\[\| \partial_t^jFu(t, \cdot)\|_{L^{q}}\lesssim
		(1+t)^{1-j-\frac{n}{2 \theta}\left( \frac{1}{m} - \frac{1}{q} \right)}\| u\|_{Z(T)}^{p_c}, \qquad j=0,1 \]
		and
		\[\||D|^{\alpha} Fu(t, \cdot)\|_{L^2}
		\lesssim  (1+t)^{-\frac{1}{2(\alpha-\theta)}\left( n \left(\frac{1}{m}- \frac{1}{2}\right) +\alpha-2\theta\right)}\| u\|_{Z(T)}^{p_c}, \qquad \text{if } \alpha = 2 \theta.\]
		However, compared with the estimates for solutions to the linear problem, in the case $p=p_c$ we have a logarithm loss of decay to the term $\||D|^{\alpha} Fu(t, \cdot)\|_{L^2}$ if $\alpha > 2 \theta$. Applying Theorem \ref{teo>} and Lemma \ref{lem:integral} with $\mu=1$ we have
		\begin{eqnarray*}
			\||D|^{\alpha} Fu(t, \cdot)\|_{L^2}&\lesssim& \int_0^t (1+t-s)^{-\frac{1}{2(\alpha-\theta)}\left( n \left(\frac{1}{m}- \frac{1}{2}\right) +\alpha-2\theta\right)} \|f(u_t(s,\cdot))\|_{L^{2}\cap L^m}ds\\
			&\lesssim& \| u\|_{Z(T)}^{p_c}\int_0^t (1+t-s)^{-\frac{1}{2(\alpha-\theta)}\left( n \left(\frac{1}{m}- \frac{1}{2}\right) +\alpha-2\theta\right)} (1+s)^{-\frac{n}{2m\theta}\left(p_c-1\right)}ds\\
			&\lesssim&  (1+t)^{-\frac{1}{2(\alpha-\theta)}\left( n \left(\frac{1}{m}- \frac{1}{2}\right) +\alpha-2\theta\right)}\log(e+t)\| u\|_{Z(T)}^{p_c}.
		\end{eqnarray*}
\end{proof}
\section*{acknowledgements}
The first author have been partially supported by Funda\c{c}\~{a}o de Amparo \`{a} Pesquisa do Estado de S\~{a}o Paulo (FAPESP),
grant number 2017/19497-3. The second author has been partially supported by Conselho Nacional de Desenvolvimento Cient\'ifico e
Tecnol\'ogico - CNPq, Proc.  308868/2015-3 and 314398/2018-0.


\begin{thebibliography}{99}
\bibitem{CL} R.C. Char\~ao, C.R. da Luz,  \emph{Asymptotic properties for a semilinear plate equation in unbounded domains}. J. Hyperbolic Differ. Equ. 6 (2009), 269--294.

\bibitem{CLI} R.C. Char\~ao, C.R. da Luz, R. Ikehata,
\newblock \emph{Sharp decay rates for wave equations with a fractional damping via new method in the Fourier space}. J. Math. Anal. Appl. 408 (2013), 247--255.

\bibitem{C} P.G. Ciarlet, \emph{A justification of the von K\'arm\'an equations}. Arch. Rational Mech. Anal. 73 (1980), 349--389.

\bibitem{DA17} M. D'Abbicco, \emph{The critical exponent for the dissipative plate equation with
power nonlinearity}. Comput. Math. Appl. 74 (2017), 1006--1014.

\bibitem{DAE14JDE} M. D'Abbicco, M.R. Ebert, \emph{Diffusion phenomena for the wave equation with structural damping in the $L^p-L^q$ framework}. J. Differential Equations 256 (2014), 2307--2336. %http://dx.doi.org/10.1016/j.jde.2014.01.002.

\bibitem{DAE15} M. D'Abbicco, M.R. Ebert, \emph{A classification of structural dissipations for evolution operators}. Math. Methods Appl. Sci.
39 (2016), 2558--2582.

\bibitem{DAE17NA} M. D'Abbicco, M.R. Ebert,  \emph{A new phenomenon in the critical exponent for structurally damped semi-linear evolution equations}. Nonlinear Anal. 149 (2017),  1--40.

\bibitem{DAL03} L. D'Ambrosio, S. Lucente, \emph{Nonlinear Liouville theorems for Grushin and Tricomi operators}. J. Differential Equations 193 (2003), 511--541.

\bibitem{EGR} M.R. Ebert, G. Girardi, M. Reissig, \emph{Critical regularity of nonlinearities in semilinear classical damped wave equations}, to appear in Mathematische Annalen, 2019.

\bibitem{F66} H. Fujita, \emph{On the blowing up of solutions of the Cauchy problem for $u_t=\triangle u+u^{1+\alpha}$}. J. Fac. Sci. Univ. Tokyo Sect. I 13 (1966), 109--124.

\bibitem{KISK} K. Ide, S. Kawashima, \emph{Decay property of regularity-loss type
and nonlinear effects for dissipative Timoshenko system}. Math. Models Methods Appl. Sci. 18 (2008), 1001--1025.

\bibitem{IIOW}  M. Ikeda,  T. Inui, M. Okamoto, Y. Wakasugi, \emph{$L^p-L^q$ estimates for the damped wave equation and the critical exponent for the nonlinear problem with slowly decaying data}. Commun. Pure Appl. Anal. 18 (2019), 1967--2008.

\bibitem{IMN04} R. Ikehata, Y. Miyaoka, T. Nakatake, \emph{Decay estimates of solutions for dissipative wave equations in $\mathbb{R}^N$ with lower power nonlinearities}. J. Math. Soc. Japan 56 (2004), 365--373.

\bibitem{IO02} R. Ikehata, M. Ohta, \emph{Critical exponents for semilinear dissipative wave equations in $\mathbb{R}^N$}. J. Math. Anal. Appl. 269 (2002), 87--97.

\bibitem{K00} G. Karch, \emph{Selfsimilar profiles in large time asymptotics of solutions to damped wave equations}. Studia Math. 143 (2000), 175--197.

\bibitem{TKSK} T. Kubo, S. Kawashima,  \emph{Decay property of regularity-loss type
and nonlinear effects for some hyperbolic-elliptic system}. Kyushu
J. Math. 63 (2009), 139--159.

\bibitem{L} Y. Liu, \emph{Decay of solutions to an inertial model for a semilinear plate equation with memory}. J. Math. Anal. Appl. 394 (2012), 616--632.

\bibitem{CP}  C. R. da Luz, M.F.G. Palma,  \emph{Asymptotic properties for second-order linear evolution problems with fractional laplacian operators}, 2018.

\bibitem{MZ} G.P. Menzala, E. Zuazua, \emph{Timoshenko's plate equation as a singular limit of
the dynamical von K\'arm\'an system}. J. Math. Pures Appl. 79 (2000), 73--94.

\bibitem{MP} E. Mitidieri, S.I. Pohozaev, \emph{Nonexistence of weak solutions for some degenerate elliptic and parabolic problems on $\R^n$}. J. Evol. Equ. 1 (2001), 189--220.

\bibitem{N04} T. Narazaki, \emph{$L^p-L^q$ estimates for damped wave equations and their applications to semi-linear problem}. J. Math. Soc. Japan 56 (2004),
585--626.

\bibitem{Stein} E.M. Stein, \emph{Singular integrals and differentiability properties of functions}. Princeton
University Press, Princeton, NJ, 1970.

\bibitem{SK}  Y. Sugitani, S. Kawashima,
\emph{Decay estimates of solutions to a semi-linear dissipative plate equation}.
J. Hyperbolic Differ. Equ. 7 (2010), 471--501.

\bibitem{TY} G. Todorova, B. Yordanov, \emph{Critical exponent for a nonlinear wave equation with damping}. J.  Differential Equations 174 (2001), 464--489.

\bibitem{Z} Qi S. Zhang, \emph{A blow-up result for a nonlinear wave equation with damping: the critical case}. C. R. Acad. Sci. Paris S\'{e}r. I Math. 333 (2001), 109--114.
\end{thebibliography}
\end{document}